\theoremstyle{plain}
\newtheorem{theorem}{Theorem}[section]
\theoremstyle{definition}
\newtheorem{definition}[theorem]{Definition}
\theoremstyle{definition}
\newtheorem{notation}[theorem]{Notation}
\theoremstyle{plain}
\theoremstyle{plain}
\theoremstyle{plain}
\newtheorem{lemma}[theorem]{Lemma}
\theoremstyle{definition}
\newtheorem{exmp}[theorem]{Example}
\theoremstyle{definition}
\theoremstyle{definition}
\newtheorem{remark}[theorem]{Remark}
\providecommand{\keywords}[1]
{
  \small	
  \textbf{\textit{Keywords---}} #1
}
\begin{document}
\title{{Closest Distance between Iterates of Typical Points}}
\author{Boyuan Zhao\\ \footnotesize{Email: \href{mailto:bz29@st-andrews.ac.uk}{bz29@st-andrews.ac.uk} }\footnote{BZ is supported by the China Scholarship Council for his PhD programme. }}
\affil{\footnotesize Department of Mathematics, University of St Andrews}
\date{}
\maketitle
\begin{abstract}
\noindent
    The shortest distance between the first $n$ iterates of a typical point can be quantified with a log rule for some dynamical systems admitting Gibbs measures. We show this in two settings. For topologically mixing Markov shifts with at most countably infinite alphabet admitting a Gibbs measure with respect to a locally H\"{o}lder potential, we prove the asymptotic length of the longest common substring for a typical point converges and the limit depends on the R\'{e}nyi entropy. For interval maps with a Gibbs-Markov structure, we prove a similar rule relating the correlation dimension of Gibbs measures with the shortest distance between two iterates in the orbit generated by a typical point.
\end{abstract}
\keywords{
    Symbolic Dynamics, Gibbs-Markov maps, Renyi entropy, correlation dimension, closest distance within orbits}
\section{Introduction}
Consider a topological Markov shift $(\Sigma_A,\sigma,\mathcal{I})$ on a (at most) countably infinite alphabet $\mathcal{I}$ with respect to a transition matrix $A$ equipped with the natural symbolic metric $d$. The first $n$ iterates of a $\underline{x}\in\Sigma_A$ under $\sigma$ are the initial $n$ symbols appearing in $\underline{x}$. We are interested in the asymptotic behaviour of the following quantity:
\begin{align*}
M_n(\underline{x})
=\max\{k:\text{$\exists$ $0\leq i<j\leq n-1$: }x_i,\dots,x_{i+k-1}=x_j,\dots,x_{j+k-1}\},
\tag{1.1}\label{ineqn:1.1}
\end{align*}
which counts the maximum length of self-repetition. Studying quantities of this type is often referred to as the longest common substring matching problem. One motivation comes from the matching of nucleotide sequences in DNA, and early results were established in the 80s by Arratia and Waterman's work \cite{ARRATIA198513}. They showed that the length of the longest common substring among two i.i.d stochastic sequences $X_1,X_2,\dots$ and $Y_1,Y_2,\dots$ taking letters in a finite alphabet with uniform distribution, \[M_n(X,Y):=\sup\{k:X_{i+m}=Y_{j+m}\text{ for all }m=1 \text{ to }k\text{ and }1\leq i,j\leq n-k\}\]satisfies an Erd\H{o}s-R\'{e}nyi law
\[\mathbbm{P}\left(\lim_{n\rightarrow\infty}\frac{M_n}{\log{n}}=\frac{2}{\log{1/p}}\right)=1,\]where $p=\mathbbm{P}(X_1=Y_1)$ the collision probability, and $-\log\mathbbm{P}(X_1=Y_1)$ is often called the collision entropy or R\'{e}nyi entropy.\\

The result can easily be translated to topological Markov shifts by replacing the stochastic sequences with two points in the shift spaces $\Sigma_X$ and $\Sigma_Y$ with distribution $\mu_X,\mu_Y$, and one can verify a similar convergence law holds for $M_n(\underline{x},\underline{y})$ (see \cite{Dembo1994LimitDO} and \cite{10.1214/aop/1176988492}). In recent works, \cite{barros2019shortest} the authors proved as an improvement of the results in \cite{Dembo1994LimitDO}, for subshift systems with an invariant probability measure $\mu$ admitting good mixing conditions (more precisely, $\alpha$-mixing with exponential decay or $\psi$-mixing with polynomial decay), the shortest distance between the $n$-orbit of a typical \textit{pair} of points $\underline{x},\underline{y}$,
\[M_n\left(\underline{x},\underline{y}\right)
=\sup\{k:x_{i_1+j}=y_{i_2+j}\text{,  $j=0,\dots,k-1$, for some $i_1,i_2\leq n-k$}\}\] converges to a R\'{e}nyi entropy for $\mu\otimes^2-$ almost every $(\underline{x},\underline{y})$ in $\Sigma_A^2$. Similar almost sure convergences are proved for $k-$point-orbits in \cite{barros2021shortest} for all $k\geq2$.

The analogous problem for $M_n(\underline{x})$ is more difficult due to short return phenomenon. For subshifts of finite type, Collet et al in \cite{10.2307/30243633} applied first and second-moment analysis to the counting random variable $N(\underline{x},n,r_n)$, which counts the number of matches of subwords of length $r_n$ among the first $n$ iterates in $\underline{x}$, there exists a constant $H_2$ which is the R\'{e}nyi entropy of a Gibbs measure $\mu$, such that 
\[\lim_{n\rightarrow+\infty}\mu\left(\left|\frac{M_n(\underline{x})}{\log{n}}-\frac{2}{H_2}\right|>\varepsilon\right)=0,\]that is, $\frac{M_n(\underline{x})}{\log{n}}$ converges to $\frac{2}{H_2}$ in probability for typical $\underline{x}$. Then one may ask if this result can be improved to an almost sure convergence, or if the convergence remains valid when the alphabet is countably infinite. The answer is given by the following theorem.

\begin{theorem}\label{Gibbs Cylinder}
For a one-sided subshift system $(\Sigma_A,\sigma,\mu,\mathcal{I})$ admitting a Gibbs measure $\mu$, %\textbf{BZ: correct this: if $H_2(\mu)$ the R\'{e}nyi entropy of $\mu$ exists}, 
\[\lim_{n\rightarrow+\infty}\frac{M_n(\underline{x})}{\log{n}}=\frac{2}{H_2}\]for $\mu$-almost every $\underline{x}\in\Sigma_A$, where $M_n(\underline{x})$ is defined in \eqref{ineqn:1.1}.
\end{theorem}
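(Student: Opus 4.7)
The plan is a first and second moment analysis of the counting variable
\[
N(n,k)=\#\{(i,j):0\leq i<j\leq n-1,\ x_{i+\ell}=x_{j+\ell}\text{ for }\ell=0,\dots,k-1\},
\]
coupled with Borel--Cantelli along an exponential subsequence to promote in-probability estimates into almost sure statements. The elementary observation $\{M_n(\underline{x})\geq k\}=\{N(n,k)\geq 1\}$ means that an upper bound on $M_n$ comes from $N(n,k_n)=0$ eventually, and a lower bound from $N(n,k_n)\geq 1$ eventually.

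For the upper bound, fix $\varepsilon>0$ and set $k_n=\lceil(2/H_2+\varepsilon)\log n\rceil$. The Gibbs estimate $\mu(C_w)\asymp e^{S_k\phi(w)-kP(\phi)}$, combined with the identification of the R\'enyi entropy under the Gibbs hypothesis as $H_2=2P(\phi)-P(2\phi)$, yields
\[
\mathbb{E}_\mu N(n,k_n)\leq n^2\sum_{|w|=k_n}\mu(C_w)^2\leq Cn^2 e^{-k_n H_2}\leq Cn^{-\varepsilon H_2}.
\]
Along the subsequence $n_j=\lceil e^{j}\rceil$ this bound is summable in $j$; Markov plus Borel--Cantelli give $N(n_j,k_{n_j})=0$ eventually almost surely, and monotonicity of $n\mapsto M_n$ interpolates to $\limsup_n M_n/\log n\leq 2/H_2+\varepsilon$ a.s.

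The lower bound is the delicate half. With $k_n=\lfloor(2/H_2-\varepsilon)\log n\rfloor$ the first moment is of order $n^{\varepsilon H_2}\to\infty$, and Chebyshev's inequality reduces the problem to controlling
\[
\mathbb{E}_\mu N(n,k_n)^2=\sum_{(i,j),(i',j')}\mu(A_{i,j,k_n}\cap A_{i',j',k_n}),\qquad A_{i,j,k}=\{x_{i+\cdot}=x_{j+\cdot}\text{ on }[0,k)\}.
\]
I would partition the pairs of pairs by the relative positions of $\{i,j,i',j'\}$: when the four subwords sit in well-separated positions of $\underline{x}$, exponential decay of correlations for the Gibbs measure gives $\mu(A\cap A')\leq(1+o(1))\mu(A)\mu(A')$ and the total matches $(\mathbb{E}N)^2$; the dangerous configurations are those in which two of the indices lie within $k_n$ of one another, producing the short-return contributions. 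Once these are shown to be of lower order, $\mathrm{Var}\,N/(\mathbb{E}N)^2\to 0$ summably fast along $n_j$, Borel--Cantelli delivers $N(n_j,k_{n_j})\geq 1$ a.s.\ for large $j$, and monotonicity promotes the estimate to $\liminf_n M_n/\log n\geq 2/H_2-\varepsilon$.

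The main obstacle is the short-return control in the countably infinite alphabet setting. In the finite-alphabet argument of Collet et al.\ short overlaps are bounded via explicit periodicity estimates on a finite set of words; here, under only a locally H\"older potential and a possibly unbounded alphabet, the convergence of $\sum_w\mu(C_w)^2$ and the uniformity of correlation decay must be extracted from spectral properties of the transfer operator, and one must verify that the $\mu$-mass of cylinders admitting a self-overlap of length $\geq s$ decays like $e^{-sH_2}$. Obtaining a quantitative, summable bound on the short-return contribution to the second moment is the technical heart of the proof and is exactly what upgrades the in-probability statement of Collet et al.\ to the almost sure convergence claimed here.
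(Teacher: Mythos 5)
Your high-level strategy — first and second moments of a counting variable, Markov/Paley--Zygmund, Borel--Cantelli along an exponential subsequence, and monotonicity interpolation — is the same as the paper's. But there is a genuine gap in your upper bound: the estimate
\[
\mathbb{E}_\mu N(n,k_n)\leq n^2\sum_{|w|=k_n}\mu(C_w)^2
\]
is false. Writing $S_m(k)=\{\underline{x}:\sigma^m\underline{x}\in C_k(\underline{x})\}$, a pair $(i,j)$ with $m=j-i<k_n$ contributes $\mu(S_m(k_n))$ to $\mathbb{E}_\mu N$, and this is the measure of the set of points whose initial $m+k_n$ symbols have period $m$; it is \emph{not} comparable to $Z_{k_n}(1)=\sum_w\mu(C_w)^2$. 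In a biased Bernoulli shift with $p>q$, $\mu(S_1(k))\approx p^{k+1}$ while $Z_k(1)=(p^2+q^2)^k$, and the ratio $p^{k+1}/(p^2+q^2)^k$ diverges exponentially. So the short-return contributions you flag as the technical heart of the lower bound are equally indispensable for the upper bound: one has to use the periodicity of a short-return block, the quasi-Bernoulli property and Lemma~\ref{Lemma:2.4} to show $\mu(S_m(r_n))\preceq e^{-\alpha r_n}$ (not $e^{-2\alpha r_n}$) for $m<r_n$ — this is exactly what the paper does in the $\Sigma_0$ and $\Sigma_1$ cases of the overlapping analysis. With that estimate in hand your union bound does converge to zero for your choice of $k_n$ (the paper additionally sharpens the overcounting via the nesting $\sigma^{-i}S_k(r_n)\subseteq\sigma^{-i}S_{2k}(r_n)$), but your displayed chain of inequalities skips the step entirely, and as written it would assert an incorrect probability bound.

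There is also a structural difference worth noting. For the lower bound the paper does not use your full counting variable $N(n,k)$: it defines $\mathcal{S}_n$ to count only matches with gap $k\geq 2r_n$, so the counted events are non-overlapping and the first moment is a clean $\psi$-mixing computation with no periodicity analysis at all; the inclusion $\{M_n<r_n\}\subseteq\{\mathcal{S}_n=0\}$ still holds because if there are no matches at all there are in particular no well-separated ones. The expensive short-return work is thereby confined to the upper bound, and the second-moment case analysis ($F_0,F_1,F_2$) only needs to track near-coincidences among four indices that are automatically $2r_n$-separated in pairs. Your $N(n,k)$ sums over all pairs including overlapping ones, so you would carry the overlap obstruction into both moments and the second-moment bookkeeping would be substantially worse. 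Restricting the lower-bound counting variable to well-separated pairs is a small but real simplification worth adopting.
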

The existence and uniqueness of the Gibbs measure with respect to locally H\"{o}lder potentials for finite $\mathcal{I}$ is well-known (see \cite{article} for reference). For $\mathcal{I}$ countably infinite, they are charaterised by theorems in \cite{sarig_1999} and \cite{Sarig03}. Detailed discussion is in \autoref{Thermodynamic Formalism for Gibbs Measures} below.\\

The counterpart of the longest substring matching problem for dynamical systems $(T,X,\mu)$ acting on non-symbolic metric spaces $(X,d)$ investigates the shortest distance between the two $n$-orbits generated by a typical pair of points. To be precise, we care about the following quantity 
\[m_n(x,y):=\min_{0\leq i,j\leq n-1}d\left(T^ix,T^jy\right).\]
There is a dimension-like object for measures, called \textit{correlation dimension} (denoted as $D_2(\mu)$, Definition~\ref{Def:4.1}), which plays a similar role to R\'{e}nyi entropies for symbolic systems. In \cite{barros2019shortest} the authors gave an asymptotic relation between $m_n(x,y)$ and the correlation dimension $D_2(\mu)$ for $\mu\otimes^2$-almost every $(x,y)$, provided good decay of correlations. Later in \cite{barros2021shortest}, this rule is generalised for a typical collection of $k$ points, $(x_1,x_2,\dots,x_k)$, for $k=2,3,\dots$. Again, we extend the investigation to the one-point case. Adopting techniques from \cite{gouezel:hal-03788538}, we will show that for one-dimensional Gibbs-Markov interval maps, there is a similar asymptotic relation between $m_n(x)$ and $D_2(\mu)$, where \[m_n(x):=\min_{0\leq i<j\leq n-1}d\left(T^ix,T^jx\right).\label{1.2}\tag{1.2}\]
\begin{theorem}\label{GM case}
Let $X$ be a closed interval of $\mathbbm{R}$, $(X, T)$ a Gibbs-Markov system and $\mu$ a Gibbs probability measure admitting exponential decay of correlations for $L^1$ against $\mathcal{BV}$ observables. Then if its upper correlation dimension $\overline{D}_2(\mu)$ is bounded from 0,
\[\liminf_{n\rightarrow\infty}\frac{\log{m_n(x)}}{-\log{n}}\geq\frac{2}{\overline{D}_2}\]
for $\mu-$almost every $x$ in the repeller $\Lambda$. If $\mu$ is absolutely continuous with respect to the Lebesgue measure, then \[\lim_{n\rightarrow\infty}\frac{\log{m_n(x)}}{-\log{n}}=\frac{2}{D_2}\]for $\mu$-almost every $x$, and in this case $\underline{D}_2(\mu)=\overline{D}_2(\mu)=1$.
\end{theorem}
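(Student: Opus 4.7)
The plan is to combine a second-moment Chebyshev argument with the hypothesised exponential decay of correlations, along the lines of \cite{gouezel:hal-03788538}. Introduce the counting statistic
\[
N(x, n, r) = \#\{(i,j) : 0 \leq i < j \leq n-1,\ d(T^i x, T^j x) < r\},
\]
so that $\{m_n(x) < r\} = \{N(x, n, r) \geq 1\}$. For the first inequality, fix $\alpha < 2/\overline{D}_2$ and $r_n = n^{-\alpha}$. I will show that $\mu(N(x, n_k, r_{n_k}) = 0)$ is summable along $n_k = 2^k$; since $m_n$ is non-increasing, $m_n \leq m_{n_k}$ on $[n_k, n_{k+1}]$, and the Borel-Cantelli conclusion upgrades to a bound for every $n$ at the cost of an arbitrarily small loss in $\alpha$.

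The first-moment computation uses shift-invariance to rewrite $\mathbb{E}[N(x, n, r_n)] = \sum_{k=1}^{n-1} (n-k)\,\mu(d(y, T^k y) < r_n)$. A covering of $X$ by intervals of length comparable to $r_n$, together with the exponential decay of correlations between $L^1$ and $\mathcal{BV}$ applied to indicators of these intervals, shows that past a mixing threshold $K_n \asymp \log n$ the probability $\mu(d(y, T^k y) < r_n)$ is comparable to $\int \mu(B(y, r_n))\,d\mu(y)$, which the definition of $\overline{D}_2$ bounds below by $r_n^{\overline{D}_2 + \epsilon}$. Short gaps $k \leq K_n$ are controlled by the Gibbs-Markov expansion: on each $T^{-k}$-branch the map $y \mapsto T^k y - y$ has derivative at least $\lambda^k - 1 > 0$, pinning $\mu(d(y, T^k y) < r_n)$ at $O(r_n)$, so these terms contribute at most $n K_n r_n$, which is of lower order than the main sum. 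Together, $\mathbb{E}[N(x, n, r_n)] \gtrsim n^{2-\alpha(\overline{D}_2 + \epsilon)}$, which diverges for $\epsilon$ small.

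The variance is the technical crux. Expanding $\mathbb{E}[N^2]$ over ordered quadruples $(i_1, j_1, i_2, j_2)$ and sorting the four indices, I split according to the smallest consecutive gap: when every gap exceeds $K_n$, two successive applications of decay of correlations decouple the two pair-events into $\mathbb{E}[N]^2(1 + o(1))$, while clustered configurations are absorbed into $\mathbb{E}[N]$ by the same Gibbs-Markov short-return lemma as above --- nearby indices with small pair distances force simultaneously small displacements under several iterates of $T$, whose total $\mu$-measure is polynomially suppressed in $n$. Consequently $\mathrm{Var}(N) = o(\mathbb{E}[N]^2)$, Chebyshev yields $\mu(N = 0) \leq \mathrm{Var}(N)/\mathbb{E}[N]^2 \to 0$, and along $n_k = 2^k$ this is summable since $\mathbb{E}[N(x, n_k, r_{n_k})]^{-1} \lesssim 2^{-k(2-\alpha(\overline{D}_2+\epsilon))}$.

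For the absolutely continuous case, the density $h$ of $\mu$ yields $\int \mu(B(y, r))\,d\mu(y) \asymp r \int h^2\, d\mathrm{Leb}$, so $\underline{D}_2 = \overline{D}_2 = 1$. The matching upper bound $\limsup \log m_n(x)/(-\log n) \leq 2$ is then immediate from the first-moment estimate: for any $\alpha > 2$, $\mu(m_n < n^{-\alpha}) \leq \mathbb{E}[N(x, n, n^{-\alpha})] \lesssim n^{2-\alpha}$ is directly summable in $n$, and Borel-Cantelli yields $m_n \geq n^{-\alpha}$ eventually. The hard part will be the variance estimate: one must confirm that the clustered-quadruple contributions, where decay of correlations does not decouple the two pair-events, are suppressed by the Gibbs-Markov expansion tightly enough that they do not produce an asymptotic competitor to $\mathbb{E}[N]^2$.
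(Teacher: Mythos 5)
The overall architecture is right --- Paley--Zygmund for the lower bound on the $\liminf$, first-moment Borel--Cantelli for the $\limsup$ in the acip case, with decay of correlations decoupling far-apart indices and a short-return estimate handling near indices --- but there is a genuine gap in the way you set up the second-moment computation that prevents your argument from delivering the $\liminf$ for general Gibbs measures, which the theorem explicitly claims.

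Your counting statistic $N(x,n,r)$ runs over all pairs $0\le i<j<n$, so the variance expansion necessarily produces clustered quadruples in which one of the two pairs, say $(i_1,j_1)$, has a small gap $j_1-i_1\le K_n$. You propose to control these by ``the same Gibbs-Markov short-return lemma as above,'' i.e.\ $\mu(\mathcal{E}_k(r))=O(r)$. That estimate is precisely \cite[Lemma 3.4]{HolNicTor}, and both the derivation you sketch (Lebesgue measure of preimages under $T^k-\mathrm{id}$ on branches) and the conclusion require $\mu$ to be absolutely continuous with bounded density. For a general Gibbs measure there is no polynomial-in-$r$ bound on $\mu(\mathcal{E}_k(r))$ available, and without it the clustered quadruple contribution can be as large as $n^3 K_n\cdot r^{\overline{D}_2}$, which, for $r=n^{-\alpha}$ with $\alpha$ near $2/\overline{D}_2$, fails to be $o(\mathbb{E}[N]^2)\sim o(n^4 r^{2\overline{D}_2})$; the ratio actually diverges like a positive power of $n$. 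So your scheme establishes the $\liminf$ only in the acip case. The paper sidesteps this by replacing $m_n$ with $m_n^{\gg}$, minimising over $i\le n/3$, $2n/3\le j<n$: then every quadruple $(i,j,s,t)$ has $\min\{j,t\}-\max\{i,s\}\ge n/3$, the only potentially small gaps are $|i-s|$ and $|j-t|$, and those never produce a short-return event --- so no short-return estimate is needed at all, and the variance bound goes through for arbitrary Gibbs measures with exponential decay. You should restrict your $N$ in the same way before computing the second moment. Two smaller points: the claim that $\mu(m_n<n^{-\alpha})\lesssim n^{2-\alpha}$ is ``directly summable'' for $\alpha>2$ is false on $(2,3]$; you need the same geometric-subsequence trick you already use for the lower bound (the paper uses $n_k=\lceil e^{k^2}\rceil$). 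And the ``two successive applications of decay of correlations'' for the fully spread-out quadruples is the content of the paper's $4$-mixing Lemma~\ref{lemma:4-mixing}, which needs care: the Lasota--Yorke/transfer-operator argument is required to guarantee the implied constant is uniform in $r$ even though the observables $\mathbbm{1}_{p,r}$ themselves depend on $r$, a point your sketch glosses over.
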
\begin{remark}
    The proof for this setting is more challenging than that for the symbolic setting because the open balls defined by the Euclidean metric and the cylinders generated by the natural partitions disagree. The analysis of short return to balls is crucial for obtaining the upper bound of $\frac{\log{m_n(x)}}{-\log{n}}$, which is also generally harder than the recurrence analysis of cylinders. The proof for the lower bound relies on the $4-mixing$ property of Gibbs measures proved in Lemma~\ref{lemma:4-mixing}.
\end{remark}
\begin{remark}
    Another difference between the two theorems stated above and theorems proved in \cite{barros2019shortest}, \cite{barros2021shortest} is that in the single point case, obtaining asymptotic upper bounds of $M_n(x)$ and $m_n(x)$ requires good mixing properties. This should be expected due to the fundamental difference between one-point orbits and orbits generated by multiple independent points, the strengthening of assumptions is to ensure the iterates decorrelate fast enough to behave like an independent sequence after a relatively small number of iterations. 
    \end{remark}
\vspace{3mm}
This theorem is applicable to the a range of systems, for example,
\begin{exmp}[\textbf{k-doubling maps}]
    $f:[0,1]\rightarrow[0,1]$, $f(x)=kx$ (mod 1) for $k=2,3,\dots$, and $\mu=Leb$.
\end{exmp}
\begin{exmp}[\textbf{Piecewise affine interval maps}]
   Let $\{a_k\}_k$ be a monotonic sequence with $a_1=1$ and $\lim_ka_k=0$. Then $f:[0,1]\rightarrow[0,1]$ with
    \[f|_{[a_{k+1},a_k)}=\frac{1}{a_k-a_{k+1}}\left(x-a_{k+1}\right)\]satisfies the assumptions above.
\end{exmp}
\begin{exmp}[\textbf{Gauss Map}]
Define the Gauss map $G:[0,1]\rightarrow[0,1]$ by  
\[G(x)=\left\{
\begin{aligned}&\frac{1}{x} \text{ (mod 1)}&x\in\left(0,1\right]\\
&0&x=0
\end{aligned}
\right.
\]
It is a full-branched map. Let $\mu_G$ be the Gauss measure, it is the Gibbs measure for the potential $-\log{DF}$ with density $\frac{d\mu_G}{dLeb}=\frac{1}{(1+x)\log2}$, then \autoref{GM case} holds for $(G,\mu_G)$. 
\end{exmp}
\begin{exmp}[\textbf{An induced map}]
    Let $F$ be the first return function to $[0,\frac{1}{2})$ of a Manneville–Pomeau map $f:[0,1]\rightarrow[0,1]$:
    \[
    f(x)=\left\{
    \begin{aligned}
    &x(1+2^ax^a)&x\in\left[0,\frac{1}{2}\right)\\
    &2x-1&x\in\left[ \frac{1}{2},1\right]
    \end{aligned}
    \right.
    \]
    for $a\in(0,1)$. There exists $\mu_F$ a Gibbs measure with respect to the potential $-\log{DF}$ (see \cite{LIVERANI_SAUSSOL_VAIENTI_1999} or \cite[\S13.2]{HolNicTor})and is absolutely continuous with respect to the Lebesgue measure.
\end{exmp}\vspace{3mm}

\section{Preliminaries for \autoref{Gibbs Cylinder}}\label{Section 2}
We consider the one-sided symbolic dynamics. Let $\mathcal{I}$ be an at most countably infinite alphabet, $A$ a $\mathbbm{N}\times\mathbbm{N}$ transition matrix of $0,1$ entries and \[\Sigma_A=\left\{\underline{x}=(x_0,x_1,x_2,\dots):x_i\in\mathcal{I}, A_{x_i,x_{i+1}}=1\right\}\]be the symbolic space. It is equipped with a metric $d(\cdot,\cdot)=d_{\theta}(\cdot,\cdot)$, for some $\theta\in(0,1)$, and \[d(\underline{x},\underline{y})=\theta^{\underline{x}\land\underline{y}},\hspace{5mm}\underline{x}\land\underline{y}:=\min\left\{j\geq0:x_{j}\neq y_{j}\right\}.\]
Without loss of generality, we may always assume $\theta=e^{-1}$. The shift space in question is denoted by $(\Sigma_A, \sigma, d)$, where the left shift map $\sigma$
\[\sigma:\Sigma_A\rightarrow\Sigma_A,\hspace{4mm}(x_0,x_1,\dots)\mapsto(x_1,x_2,\dots).\]When $\mathcal{I}$ is finite the space $(\Sigma_A,d)$ is compact. A measure $\mu$ is $\sigma$-invariant if for all measurable $E\subseteq\Sigma_A$\[\mu(E)=\mu(\sigma^{-1}E).\] 
In dynamical systems literature, $(\Sigma_A,\sigma)$ is often referred to as a \textit{topological Markov chain}.

\subsection{General Definitions and Lemmas}
First, we need the most basic notion of open sets in the symbolic space.
\begin{definition}[Cylinders]
A \textit{k-cylinder} in $\Sigma_A$ is a subset set of the form
\[[x_0,x_1,\dots,x_{k-1}]=\{\underline{y}\in\Sigma_A:y_i=x_i,\forall i=0,1,\dots,k-1\}.\]The set of all $k$-cylinders is denoted as $\mathcal{C}_k$. For any $\underline{x}\in\Sigma_A$, let $C_k(\underline{x})$ be the $k$-cylinder containing $\underline{x}$, i.e. $C_k(\underline{x})=[x_0,x_1,\dots,x_{k-1}]$. Cylinders are generating in the sense that any subset in $\Sigma_A$ can be represented by a countable union of cylinders. Also, cylinders are also the open balls defined by the symbolic metric $d$.
\end{definition}

\begin{definition}[R\'{e}nyi entropy]
For each $n\in\mathbbm{N}$, $t>0$, define the quantities \[Z_n(t)=\sum_{C\in\mathcal{C}_n}\mu(C)^{1+t}.\]
The \textit{R\'{e}nyi entropy} (with respect to the natural partition given by the alphabet $\mathcal{I}$) of the system is given by \[H_2(\mu)=\lim_{n\rightarrow+\infty}\frac{\log{Z_n(1)}}{-n},\tag{2.2}\label{ineqn:2.2}\]whenever this limit exists, and the generalised R\'{e}nyi entropy function is \[\mathcal{R}_{\mu}(t)=\liminf_{n\rightarrow+\infty}\frac{\log{Z_n(t)}}{-tn}.\]
In the information theory context, this entropy is also called \textit{collision entropy} for Bernoulli systems, as it reflects the probability of two i.i.d. random variables coinciding i.e. $H_2=-\log{\sum_{i}p_i^2}=-\log\mathbbm{P}{(X=Y)}$.\\
R\'{e}nyi entropy does not always exist, especially when the alphabet is not finite. For the finite alphabet case, Haydn and Vaienti proved in \cite[Theorem 1]{haydn_vaienti_2010} that $\mathcal{R}_{\mu}(t)$ converges uniformly on compact subsets of $\mathbbm{R}^{+}$ for all weakly $\psi$-mixing invariant measures, in particular, if $\mu$ is a Gibbs measure, $H_2(\mu)=\mathcal{R}_{\mu}(1)=2P_{top}(\phi)-P_{top}(2\phi)$ where $P_{top}$ is the topological pressure. For infinite alphabet Markov chains, R\'enyi entropy is obtained in \cite{5895075}.
\end{definition}
\begin{definition}{}
The system is said to be $\psi$\textit{-mixing}, if there is some monotone decreasing function $\psi(\cdot):\mathbbm{N}\rightarrow[0,\infty)$ such that for all $n,k$, all $E\in\mathcal{C}_n$, and all $F\in\mathcal{C}^*$, where $\mathcal{C}^*=\bigcup_{j=0}^{\infty}\mathcal{C}_j$, 
\[\left|\frac{\mu(E\cap\sigma^{-n-k}F)}{\mu(E)\mu(F)}-1\right|\leq\psi(k).\tag{2.3}\label{ineqn:2.3}\]
A measure is called \textit{quasi-Bernoulli} if there is some constant $B>1$ such that for any finite words \textbf{i}, \textbf{j}$\in\mathcal{C}^*$
\[\mu([\textbf{ij}])\leq B\mu([\textbf{i}])\mu([\textbf{j}]).\] Automatically, $\psi$-mixing entails the quasi-Bernoulli property. 
\end{definition}
\begin{lemma}\label{exp decay psi mixing}
If the probability measure $\mu$ is $\psi$-mixing with $\psi(\cdot)$ summable, there exists constant $\rho\in(0,1)$ such that $\mu(C)\leq\rho^n$ for all  $C\in\mathcal{C}_n$ and all $n$. 
\end{lemma}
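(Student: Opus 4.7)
The plan is to exploit $\psi$-mixing to split a long $n$-cylinder into roughly $n/(\ell+k)$ short $\ell$-sub-cylinders separated by gaps of length $k$, and then iterate the mixing inequality to extract exponential decay. Since $\psi$ is summable and monotone decreasing, $\psi(k)\to 0$, so I fix $k$ with $1+\psi(k)\leq 2$. For $C=[x_0,\dots,x_{n-1}]\in\mathcal{C}_n$, set $B_j:=[x_{j(\ell+k)},\dots,x_{j(\ell+k)+\ell-1}]$ for $0\leq j\leq m-1$ with $m=\lfloor n/(\ell+k)\rfloor$. The crude inclusion $C\subseteq\bigcap_{j=0}^{m-1}\sigma^{-j(\ell+k)}B_j$ together with iterated application of \eqref{ineqn:2.3} yields
\[
\mu(C)\leq(1+\psi(k))^{m-1}\prod_{j=0}^{m-1}\mu(B_j)\leq(1+\psi(k))^{m-1}a_\ell^m,
\]
where $a_\ell:=\sup_{B\in\mathcal{C}_\ell}\mu(B)$. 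The inductive step is justified by decomposing the partial intersection $\bigcap_{j=0}^{i-1}\sigma^{-j(\ell+k)}B_j$ as a disjoint union of genuine $((i-1)(\ell+k)+\ell)$-cylinders (one per choice of letters filling the $k$-gaps) and applying $\psi$-mixing termwise against $B_i$; the shift amount $i(\ell+k)$ equals exactly the length of those cylinders plus $k$, so the hypothesis fits.

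The critical remaining step is to exhibit some $\ell$ with $(1+\psi(k))a_\ell<1$. The sequence $(a_\ell)_\ell$ is non-increasing because every $(\ell+1)$-cylinder sits inside a unique $\ell$-cylinder, so it converges to some $a\geq 0$. If $a>0$, a K\"onig-style argument on nested maximizing cylinders produces a point $x^\star\in\Sigma_A$ with $\mu(\{x^\star\})\geq a$; $\sigma$-invariance forces the orbit of $x^\star$ to be finite and hence periodic with some period $p$, and testing \eqref{ineqn:2.3} on $C_n(x^\star)$ with $p\nmid n+k$ gives $\mu(C\cap\sigma^{-n-k}C)=0\neq\mu(C)^2$, forcing $\psi(k)\geq 1$ for every such $k$. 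This contradicts summability unless $p=1$, i.e.\ $\mu$ is a Dirac mass on a fixed point, a degenerate situation implicitly excluded in the setting (Gibbs measures have full support). Hence $a_\ell\to 0$ and some $\ell$ satisfies the required bound.

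Fixing such $\ell$ with $(1+\psi(k))a_\ell\leq\tfrac12$, the estimate becomes $\mu(C)\leq 2^{-m}$. Since $n<(m+1)(\ell+k)$ gives $m\geq n/(\ell+k)-1$, one obtains $\mu(C)\leq 2\tilde{\rho}^n$ with $\tilde{\rho}:=2^{-1/(\ell+k)}\in(0,1)$. The multiplicative constant $2$ and the finitely many small-$n$ cases (where $m=0$ and the iteration is vacuous) are both absorbed by enlarging $\tilde{\rho}$ slightly to some $\rho\in(\tilde{\rho},1)$. The main obstacle is the auxiliary claim $a_\ell\to 0$; the iterated $\psi$-mixing bound and the final geometric estimate are routine once this is in hand.
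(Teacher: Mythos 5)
The paper does not prove this lemma itself; it only cites \cite{MR1483874}, where the standard route is a near-submultiplicativity argument on $a_\ell:=\sup_{C\in\mathcal{C}_\ell}\mu(C)$: writing any $(n+m+k)$-cylinder inside $[u]\cap\sigma^{-n-k}[w]$ with $u\in\mathcal{C}_n$, $w\in\mathcal{C}_m$ gives $a_{n+m+k}\le(1+\psi(k))\,a_n a_m$, and a Fekete-type iteration then yields either exponential decay of $a_\ell$ or $a_\ell\equiv 1$ (the degenerate Dirac-on-a-fixed-point case). Your block decomposition of a long cylinder into $\ell$-words separated by $k$-gaps, together with the termwise application of \eqref{ineqn:2.3} to the genuine sub-cylinders, is correct and in exactly the same spirit; the real content is still the auxiliary claim $a_\ell\to 0$, and that is where the gap lies.

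You assert that for the periodic atom $x^\star$ and $p\nmid n+k$ the set $C_n(x^\star)\cap\sigma^{-n-k}C_n(x^\star)$ has measure zero. This is false in general: the set is a (typically nonempty) union of $(2n+k)$-cylinders, and nothing forces its measure to vanish unless $\mu$ is already concentrated on the periodic orbit, which you have not established. The argument can be repaired by passing to the limit: replace $x^\star$ by a strictly periodic point $y^\star=\sigma^i x^\star$ of minimal period $p$ (the orbit is only eventually periodic, so one must shift first), observe $\mu(\{y^\star\})\ge a$, and note that for $n$ large enough that $C_n(y^\star)$ separates the finite orbit, $p\nmid n+k$ gives $y^\star\notin C_n(y^\star)\cap\sigma^{-n-k}C_n(y^\star)$ and hence $\mu\bigl(C_n(y^\star)\cap\sigma^{-n-k}C_n(y^\star)\bigr)\le\mu(C_n(y^\star))-\mu(\{y^\star\})\to 0$. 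Against the $\psi$-mixing lower bound $(1-\psi(k))\mu(\{y^\star\})^2$ this forces $\psi(k)\ge 1$, and since this works for every $k$ when $p\ge 2$, summability is contradicted. For $p=1$ you additionally need the upper side of \eqref{ineqn:2.3} (with $n,k\to\infty$) to upgrade ``atom at a fixed point'' to ``full Dirac mass''; you assert but do not prove this. A smaller point: the K\"onig step needs the remark that $\{C\in\mathcal{C}_\ell:\mu(C)\ge a\}$ is finite because $\mu$ is a probability measure, so that K\"onig's lemma applies despite $\mathcal{I}$ being countable. The route through atoms is correct in spirit but more elaborate than the Fekete argument, which avoids K\"onig and periodic points entirely.
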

The proof is given for finite alphabet case in \cite{MR1483874} which remains valid for countable alphabet case.
\begin{notation}

For two sequences $\{a_k\}_k,\{b_k\}_k$, the following notation is inherited from \cite[Def 2.9]{10.2307/30243633}.\\Say $a_k\approx b_k$ if $\log{a_k}-\log{b_k}$ is bounded, or equivalently the ratio $\left|\frac{a_k}{b_k}\right|$ is uniformly bounded away from $0$ and $+\infty$.\\ Say $\displaystyle {a_k\preceq b_k}$ if there is $\{c_k\}_k$ such that $a_k\leq c_k$ for all $k$, and $b_k\approx c_k$. \\Both relations are transitive.
\end{notation}

\begin{notation}The following notation is also used.\\
(1) For each $\omega\in\mathbbm{N}$ and any finite word $(x_0,x_1,\dots,x_{k-1})$ of length $k\geq2$, the notation $(x_0,x_1,\dots,x_{k})^\omega$ means the word is repeated $\omega$-times whenever it is allowed. \\
(2) For any $\underline{x}\in\Sigma$, denote the $k$-word starting from position $m$ by $x_m^k=(x_m,x_{m+1},\dots,x_{m+k-1})$.\\
(3) The indicator function of a set $E$ is denoted by $\mathbbm{1}_E$.\\
(4) The expectation and variance of a random variable $X$ are denoted respectively by $\mathbbm{E}[X]$ and $Var[X]$.\\
(5) The cardinality of a set $E$ is denoted by $\#E$.

\end{notation}
\subsection{Thermodynamic Formalism for Gibbs Measures}\label{Thermodynamic Formalism for Gibbs Measures}
In \cite{10.2307/30243633} the authors considered the substring matching problem for a single point in a Markov subshift system, the relevant measure $\mu$ is Gibbsian with respect to a locally H\"older potential. In this section, we will provide conditions and lemmas which enable us to include certain types of countable Markov subshifts. The majority of references for this section can be found in \cite{sarig_1999} and \cite{Sarig03}. 

\begin{definition}\label{Def:2.5}
A potential $\phi:\Sigma_A\rightarrow\mathbbm{R}$ is called \textit{locally H\"{o}lder} if there exists $M_{\phi}>0,$ $\zeta\in(0,1)$ such that for all $k\geq1$, $var_k(\phi)\leq M_{\phi}\zeta^k$ where \[var_k(\phi):=\sup\left\{\left|\phi(\underline{x})-\phi(\underline{y})\right|:x_i=y_i,\,\forall i\leq k-1\right\}.\] 
\end{definition}
\begin{definition}
The system $(\Sigma_A,\sigma)$ is \textit{topologically mixing} if for all $a,b\in \mathcal{I}$, there is $n_0\in\mathbbm{N}$ such that for all $n>n_0$,\[[a]\cap\sigma^{-n}[b]\neq\emptyset.\]
    The system has \textit{big image and preimage} property, if there is a finite subset $S\subseteq \mathcal{I}$ such that for any $a\in\mathcal{I}$, there are $b_1,b_2\in\mathcal{S}$ such that $[b_1\,a\,b_2]\neq\emptyset$. This condition is trivially satisfied for all subshifts of finite type.
\end{definition}
\begin{definition}\label{Def:2.6}
Given a potential $\phi$ on $\Sigma_A$, a $\sigma$-invariant measure $\mu$ is said to be\textit{ Gibbs }if there are constants $c_{\phi}>0$, $P\in\mathbbm{R}$ such that for each $m\in\mathbbm{N}$,  \[c_{\phi}^{-1}\leq\frac{\mu(C_m(\underline{x}))}{\exp{\left(-mP+S_m\phi(\underline{x})\right)}}\leq c_{\phi},\tag{2.4}\label{ineqn:2.4}\]where $S_m\phi(\underline{x})=\sum_{i=0}^{m-1}\phi(\sigma^i\underline{x})$.
\end{definition}

\begin{definition}
Let $\phi$ be a locally H\"{o}lder potential, the \textit{partition functions} with respect to $\phi$ are defined by 
\[P_n(\phi,a):=\sum_{\substack{\sigma^n\underline{x}=\underline{x}\\x_0=a}}e^{S_n(\phi)(\underline{x})}\tag{2.5},\] for each $a\in\mathcal{I}$, and the \textit{Gurevich pressure} $P_G(\phi)$:\[P_G(\phi):=\lim_{n\rightarrow\infty}\frac{1}{n}\log{P_n(\phi,a)}.\tag{2.6}\label{2.6}\]
For topologically mixing countable subshifts, $P_G(\phi)$ exists and is independent of the symbol $a\in\mathcal{I}$ (\cite[Theorem 1]{sarig_1999}).\end{definition} 
\vspace{3mm}
The existence of Gibbs measure for a countable topological Markov subshift is characterised by the following theorem:

\begin{theorem}\label{Sarig CMS}\cite[Theorem 1]{Sarig03},\cite[Theorem 8]{sarig_1999} Let $\phi$ be a locally H\"{o}lder potential, $(\Sigma_A,\sigma,\mathcal{I})$ topologically mixing , then $\phi$ has an invariant Gibbs measure $\mu$ if and only if the system satisfies the big image and preimage property and $P_G(\phi)<\infty$. In particular, let $\mathcal{L}_\phi$ be the Ruelle-Perron-Frobenius operator associated with $\phi$,\[\mathcal{L}_\phi f(\underline{x})=\sum_{\sigma\underline{y}=\underline{x}}e^{\phi(\underline{y})}f(\underline{y}),\] then $\lambda=e^{P_G(\phi)}$ is the eigenvalue of $\mathcal{L}_\phi$ and the eigenfunction $h$ is uniformly bounded from $0$ and $\infty$, also $P_G(\phi)=P$ for $P$ in \eqref{ineqn:2.4}.
\end{theorem}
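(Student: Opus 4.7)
I would prove the two directions and the spectral information simultaneously by following the generalized Ruelle--Perron--Frobenius programme adapted to the countable alphabet case.

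\textbf{Step 1: Reduction and setup.} After replacing $\phi$ by $\phi-P_G(\phi)$, I may assume $P_G(\phi)=0$, so the target eigenvalue for $\mathcal{L}_\phi$ is $1$. The key combinatorial input from BIP is that there exists a finite $S\subseteq\mathcal{I}$ such that any two symbols $a,b\in\mathcal{I}$ can be joined by a word $aub$ with $u$ supported on $S$ and of bounded length; locally H\"older regularity of $\phi$ then implies that the weights along such connectors are bounded below, which will be the quantitative substitute for compactness.

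\textbf{Step 2: The ``if'' direction via approximation.} Assume BIP and $P_G(\phi)<\infty$. I would approximate $\Sigma_A$ by its finite subsystems $\Sigma_F$ with $F\subset\mathcal{I}$ finite and $F\uparrow\mathcal{I}$. On each $\Sigma_F$ the classical Ruelle theorem produces an eigenvalue $\lambda_F$, an eigenfunction $h_F$ and an eigenmeasure $\nu_F$ of the dual. Normalizing $h_F(\underline{x}^\star)=1$ at a fixed reference point $\underline{x}^\star$ with $x_0^\star\in S$, the BIP property lets me run the standard two-sided distortion estimate for $\frac{\mathcal{L}_\phi^n(\mathbbm{1}_{[a]})(\underline{x})}{\mathcal{L}_\phi^n(\mathbbm{1}_{[a]})(\underline{y})}$ using a connector in $S$; this gives uniform (in $F$ and $\underline{x}$) bounds $c_\phi^{-1}\leq h_F(\underline{x})\leq c_\phi$. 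By a diagonal/Tychonoff argument I extract subsequential limits $h$ and $\nu$, and by Gurevich's formula $\lambda_F\to e^{P_G(\phi)}=1$, so $\mathcal{L}_\phi h = h$ and $\mathcal{L}_\phi^*\nu=\nu$. The candidate Gibbs measure is $d\mu = h\,d\nu$.

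\textbf{Step 3: Verification of the Gibbs inequality.} Expanding $\int \mathbbm{1}_{C_m(\underline{x})}\,d\mu$ by iterating $\mathcal{L}_\phi^{-1}$ and using the uniform bound on $h$, I would obtain
\[
\mu(C_m(\underline{x}))\;=\;\int_{C_m(\underline{x})} h\, d\nu \;\approx\; h(\underline{x}) \exp(S_m\phi(\underline{x})-mP_G(\phi))\,\nu([x_{m-1}]),
\]
where locally H\"older regularity controls the ratio $e^{S_m\phi}$ on a cylinder up to a multiplicative constant, and BIP ensures $\nu([a])$ is uniformly bounded above and below (again by inserting a connector through $S$ and using $\nu$-invariance of $\mathcal{L}_\phi^*$). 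This gives (\ref{ineqn:2.4}) with $P=P_G(\phi)$, proving the spectral conclusions along the way.

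\textbf{Step 4: The ``only if'' direction.} Conversely, assume $\mu$ is an invariant Gibbs measure with constant $P$. Applied to an $n$-periodic point $\underline{x}$ with $x_0=a$, the Gibbs estimate gives $\mu(C_n(\underline{x}))\asymp e^{-nP+S_n\phi(\underline{x})}$, and summing over all such periodic cylinders,
\[
\sum_{\substack{\sigma^n\underline{y}=\underline{y}\\ y_0=a}} e^{S_n\phi(\underline{y})} \;\preceq\; e^{nP}\sum_{\substack{\sigma^n\underline{y}=\underline{y}\\ y_0=a}} \mu(C_n(\underline{y})) \;\leq\; e^{nP},
\]
whence $P_G(\phi)\leq P<\infty$. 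The matching lower bound comes from a pigeonhole argument on $[a]$, giving $P_G(\phi)=P$. For BIP, I would argue that if it failed then there is a sequence of symbols $a_k$ that cannot be entered/exited through a fixed finite set, so the total mass $\sum_a\mu([a])$ carried by these ``isolated'' symbols together with the Gibbs lower bound forces $\mu(\Sigma_A)=\infty$, a contradiction.

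\textbf{Main obstacle.} The compactness replacement in Step~2 is the crux: without BIP the eigenfunctions $h_F$ can degenerate (either vanish or blow up along escaping sequences), and no conformal measure survives the limit. Quantifying precisely how BIP yields the uniform two-sided bound on $h_F$ independently of the exhaustion $F$, and simultaneously preventing mass escape in $\nu_F$, is the technical heart of the argument.
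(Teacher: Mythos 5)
The paper does not prove this theorem; it is cited verbatim from Sarig (\cite[Theorem 1]{Sarig03}, \cite[Theorem 8]{sarig_1999}), so there is no in-paper argument to compare against. Evaluated on its own, your reconstruction via finite-alphabet exhaustion $\Sigma_F \uparrow \Sigma_A$ is a genuinely different route from Sarig's, which classifies the transfer operator directly (positive recurrence via a Vere-Jones type dichotomy) rather than passing to a limit of finite subsystems. That approach can be made to work, but as written it has two concrete gaps.

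In Step 3 you assert that ``BIP ensures $\nu([a])$ is uniformly bounded above and below.'' For a countably infinite alphabet this is false: the $\nu([a])$ sum to $1$, so they necessarily tend to $0$ along any enumeration. What BIP actually supplies, and what the computation needs, is a uniform lower bound on $\nu\bigl(\sigma[a]\bigr) = \nu\bigl(\bigcup_{b:A_{ab}=1}[b]\bigr)$: big images guarantees $\sigma[a]$ contains $[b]$ for some $b$ in the fixed finite set $S$, hence $\nu(\sigma[a]) \geq \min_{b\in S}\nu([b]) > 0$. The conformal identity should then read $\nu(C_m(\underline{x})) \asymp e^{S_m\phi(\underline{x})-mP}\,\nu(\sigma[x_{m-1}])$, not with $\nu([x_{m-1}])$ in the last factor. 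In Step 4 the ``only if'' direction is also underjustified: failure of BIP does not simply ``force $\mu(\Sigma_A)=\infty$.'' The standard mechanism is that the Gibbs inequality yields the quasi-Bernoulli relation $\mu([ab]) \asymp \mu([a])\mu([b])$, and BIP is then extracted from the combinatorics of this relation together with $\sum_a\mu([a])=1$; your sketch skips that extraction. Finally, the tightness of $\{\nu_F\}$ as $F\uparrow\mathcal{I}$ is not a consequence of the eigenfunction bounds alone and needs its own argument combining BIP with $P_G(\phi)<\infty$, which you flag as the ``main obstacle'' but do not resolve.
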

\begin{remark}
    It is shown by \cite[Theorem 3,Theorem 8]{sarig_1999} that the Gibbs measure given in Theorem~\ref{Sarig CMS} is the unique equilibrium state which realises the equality below
    \[P_G(\phi):=\sup\left\{h_\nu+\int\phi\,d\nu:\nu \text{ is $\sigma$ invariant and }\int\phi\,d\nu>-\infty\right\}.\]
\end{remark}

We will need the following lemmas for the proof of Theorem~\ref{Gibbs Cylinder}. 
\begin{lemma}\label{Lemma:psi-mixing}
    For subshifts of finite type or countable shifts satisfying the assumptions of \autoref{Sarig CMS}, if $\mu$ is the (unique) Gibbs measure with respect to a locally H\"{o}lder potential $\phi$, it has exponential rate $\psi$-mixing for cylinders.
\end{lemma}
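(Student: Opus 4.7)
The plan is to reduce the statement to the spectral gap of the normalized Ruelle--Perron--Frobenius operator and then translate that gap into $\psi$-mixing using the Gibbs property. Let $\lambda=e^{P_G(\phi)}$ and $h$ be the positive eigenfunction from \autoref{Sarig CMS}; since $h$ is uniformly bounded from $0$ and $\infty$, the normalized operator $\hat{\mathcal{L}}f := \lambda^{-1}h^{-1}\mathcal{L}_\phi(hf)$ satisfies $\hat{\mathcal{L}}\mathbbm{1}=\mathbbm{1}$ and is the Koopman-adjoint under $\mu$: $\int (\hat{\mathcal{L}}f)\,g\,d\mu=\int f\cdot (g\circ\sigma)\,d\mu$. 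My starting point, which I would cite from Ruelle theory in the finite-alphabet case and from Sarig's spectral theory under BIP in the countable one, is that $\hat{\mathcal{L}}$ has a spectral gap on a Banach space of bounded locally H\"older functions: there exist $C>0$ and $\rho\in(0,1)$ such that $\|\hat{\mathcal{L}}^k f-\int f\,d\mu\|_\infty\leq C\rho^k \|f\|_\star$, where $\|\cdot\|_\star$ combines the sup norm with a locally H\"older seminorm.

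The core computation is to iterate $\hat{\mathcal{L}}$ exactly $n$ times on the indicator of a cylinder $E=[a_0\dots a_{n-1}]\in\mathcal{C}_n$. Only the single preimage $a_0\dots a_{n-1}y$ contributes, so for $y$ with $A_{a_{n-1},y_0}=1$,
\[
\hat{\mathcal{L}}^n\mathbbm{1}_E(y)=\lambda^{-n}\,\frac{h(a_0\dots a_{n-1}y)}{h(y)}\,e^{S_n\phi(a_0\dots a_{n-1}y)},
\]
and $0$ otherwise. Using $P_G(\phi)=\log\lambda$, the boundedness of $h$, and the Gibbs bound \eqref{ineqn:2.4}, one gets $\hat{\mathcal{L}}^n\mathbbm{1}_E(y)\asymp \mu(E)$ uniformly in $E$, $n$ and $y$ on the allowed region. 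A standard bounded-distortion estimate driven by $var_k(\phi)\leq M_\phi\zeta^k$ from \autoref{Def:2.5} further bounds the locally H\"older seminorm of $\hat{\mathcal{L}}^n\mathbbm{1}_E$ by a constant multiple of $\mu(E)$, so $\|\hat{\mathcal{L}}^n\mathbbm{1}_E\|_\star\leq C'\mu(E)$ with $C'$ independent of $E$ and $n$.

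With these pieces in hand, for any $F\in\mathcal{C}^*$ I would write
\[
\mu(E\cap\sigma^{-n-k}F) \;=\; \int \mathbbm{1}_E\cdot (\mathbbm{1}_F\circ\sigma^{n+k})\,d\mu \;=\; \int \hat{\mathcal{L}}^k\!\left(\hat{\mathcal{L}}^n\mathbbm{1}_E\right)\,\mathbbm{1}_F\,d\mu,
\]
apply the spectral gap to $f=\hat{\mathcal{L}}^n\mathbbm{1}_E$ (whose $\mu$-integral equals $\mu(E)$) to obtain $|\hat{\mathcal{L}}^{n+k}\mathbbm{1}_E(y)-\mu(E)|\leq CC'\rho^k\mu(E)$ uniformly in $y$, and integrate against $\mathbbm{1}_F$ to conclude $|\mu(E\cap\sigma^{-n-k}F)-\mu(E)\mu(F)|\leq CC'\rho^k\mu(E)\mu(F)$, which is precisely \eqref{ineqn:2.3} with exponentially decaying $\psi(k)=CC'\rho^k$.

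The main obstacle, which I would cite rather than reprove, is the spectral gap of $\hat{\mathcal{L}}$ in the countable-alphabet setting. This is where the BIP assumption in \autoref{Sarig CMS} really earns its keep: without it, $\hat{\mathcal{L}}$ need not be quasi-compact on any natural function space and the decay rate $\rho^k$ can fail. Everything downstream --- in particular the uniform locally H\"older control of $\hat{\mathcal{L}}^n\mathbbm{1}_E$ --- is bookkeeping using \autoref{Def:2.5} and \autoref{Def:2.6}.
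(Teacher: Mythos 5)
Your proposal is correct and follows essentially the same route as the paper's proof: both pass to the normalized transfer operator, use the duality $\mu(E\cap\sigma^{-n-k}F)=\int\hat{\mathcal{L}}^{n+k}\mathbbm{1}_E\cdot\mathbbm{1}_F\,d\mu$, cite the spectral gap on a bounded-H\"older-type Banach space (Aaronson--Denker/Sarig), and reduce to the key estimate $\|\hat{\mathcal{L}}^n\mathbbm{1}_E\|_\star\leq C'\mu(E)$ obtained from the single-preimage observation together with the Gibbs bound \eqref{ineqn:2.4} and the locally H\"older distortion control. The only cosmetic difference is that the paper normalizes the potential so $\mathcal{L}_\phi\mathbbm{1}=\mathbbm{1}$ whereas you conjugate by $h$ and divide by $\lambda$; these are equivalent.
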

\begin{proof}For $\mathcal{I}$ finite, one can verify exponential decay of $\psi$ with \cite[Proposition 1.14]{article}.\\
For countable $\mathcal{I}$, because the corresponding Gibbs measures for two comohologous H\"{o}lder potentials coincide, without loss of generality, one can assume $\mathcal{L}_\phi1=1$, which implies $P_G(\phi)=0$ and the conformal measure $\nu$ identifies with the Gibbs measure $\mu$ on cylinders.\\
Firstly, by locally H\"{o}lder property of $\phi$, there is $M_1>0$ such that \[\left|e^{S_n\phi(\underline{x})-S_n\phi(\underline{y})}-1\right|\leq M_1d(\underline{x},\underline{y})\tag{2.7}\label{ineqn:2.7}\]whenever there is $a\in\mathcal{I}$ such that $\underline{x},\underline{y}\in[a]$\\
Also, as the invariant density is uniformly bounded from 0 and $+\infty$, there is $M_2>0$ such that for each $n$-cylinder $C\in\mathcal{C}_n$,\[M_2^{-1}e^{S_n\phi}\leq\mu(C)\leq M_2e^{S_n\phi}.\label{ineqn:2.8}\tag{2.8}\]
Now define the norm for real-valued function $f$ acting on $\Sigma_A$,
\[\|f\|_L:=\|f\|_{\infty}+D_{\beta}f,\]where $\beta$ is the $\sigma-$algebra generated by $\{\sigma[a]:a\in\mathcal{I}\}$ and 
\[D_{\beta}:=\sup_{b\in\beta}\sup_{x,y\in b}\frac{|f(x)-f(y)|}{d(x,y)}.\]The operator $\mathcal{L}_\phi:Lip_{1,\beta}\rightarrow L$ where the spaces are defined by $Lip_{1,\beta}:=\{f:\Sigma_A\rightarrow\mathbbm{R}:\|f\|_1,D_{\beta}f\leq\infty\}$ and $L:=\{f:\Sigma_A\rightarrow\mathbbm{R}:\|f\|_L<\infty\}$.\\
Consider $E=[e_0,e_1,\dots,e_{n-1}]\in\mathcal{C}_n$ and $F\in\mathcal{C}^*$, as $\mathcal{L}_\phi^*\mu=\mu$,
\begin{align*}
    &\left|\mu(E\cap\sigma^{-(n+k)}F)-\mu(E)\mu(F)\right|
    =\left|\int\mathbbm{1}_E\cdot\mathbbm{1}_F\circ\sigma^{n+k}\,d\mu-\int\mathbbm{1}_E\,d\mu\int\mathbbm{1}_F\,d\mu\right|\\
    &\leq\left|\int\mathbbm{1}_F\left((\mathcal{L}_\phi^{n+k}\mathbbm{1}_E)-\int\mathbbm{1}_E\,d\mu\right)\,d\mu\right|
    \leq\mu(F)\left\|\mathcal{L}_\phi^{n+k}\mathbbm{1}_E-\int\mathbbm{1}_F\,d\mu\right\|_{\infty}\\
    &\leq\mu(F)\left\|\mathcal{L}_\phi^k(\mathcal{L}_\phi^n\mathbbm{1}_E)-\int\mathcal{L}_\phi^n\mathbbm{1}_E\,d\mu\right\|_{L}
\end{align*}
It is a standard fact (see for example \cite[Theorem 1.6]{Aaronson2007LOCALLT} or \cite[Theorem 5]{sarig_1999}) that there are $K_{\phi}>0$ and $\kappa\in(0,1)$ such that 
\[\left\|\mathcal{L}_\phi^k(\mathcal{L}_\phi^n\mathbbm{1}_E)-\int\mathcal{L}_\phi^n\mathbbm{1}_E\,d\mu\right\|_{L}\leq K_{\phi}\kappa^k\|\mathcal{L}_\phi^n\mathbbm{1}_E\|_L.\]\\
\textit{Claim.} $\|\mathcal{L}_\phi^n\mathbbm{1}_E\|_L\leq M_3\mu(E)$ for some $M_3>0$.
\begin{proof}[Proof of claim.] It is easy to see for each $E\in\mathcal{C}_n$ and $\underline{x}\in\Sigma_A$, there is (at most) only one $\underline{z}\in E=[e_0,e_1,\dots,e_{n-1}]$ such that $\sigma^n\underline{z}=\underline{x}$, i.e.
\[\underline{z}=(e_0,\dots,e_{n-1},x_0,x_1,\dots),\]hence by \eqref{ineqn:2.8}
\[\mathcal{L}_\phi^n\mathbbm{1}_E(\underline{x})=\sum_{\sigma^n\underline{y}=\underline{x}}e^{S_n\phi(\underline{y})}\mathbbm{1}_E(\underline{y})=e^{S_n\phi(\underline{z})}\leq M_2\mu(E),\]
and for $\underline{x},\underline{y}\in[b]\in\beta,$ by \eqref{ineqn:2.7}
\begin{align*}\left|\mathcal{L}_\phi^n\mathbbm{1}_E(\underline{x})-\mathcal{L}_\phi^n\mathbbm{1}_E(\underline{y})\right|\leq\sum_{\substack{\underline{z},\underline{w}\in[e_0,\dots,e_{n-1},b]\\\sigma^n\underline{z}=\underline{x},\,\sigma^n\underline{w}=\underline{y}}}e^{S_n\phi(\underline{w})}\left|e^{S_n\phi(\underline{z})-S_n\phi(\underline{w})-1}\right|
\leq M_2\mu(E)M_1d(\underline{x},\underline{y}),
\end{align*}this gives $\mathcal{L}_\phi^n\mathbbm{1}_E\leq\mu(E)M_1M_2$.
and the claim is proved with $M_3=M_2(1+M_1)$.
\end{proof}
The proof of lemma follows from the claim.
\end{proof}

Now we will show that for countable topological Markov shifts, R\'{e}nyi entropy of the Gibbs measure $\mu$ exists and is given by a formula involving the pressure function. The analogous statement for subshift of finite types is mentioned in \cite{haydn_vaienti_2010} and is easy to verify. 
\begin{lemma}\label{lemma:RenyiEntropy}
Under the assumptions of \autoref{Sarig CMS}, for $\mu$ the unique Gibbs measure with respect to $\phi$, the R\'enyi entropy $H_2(\mu)$ exists and is given by
\[H_2(\mu)=\lim_{n\rightarrow+\infty}\frac{\log{Z_n(1)}}{-n}=2P_G(\phi)-P_G(2\phi).\] \end{lemma}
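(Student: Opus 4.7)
The plan is to exploit the Gibbs inequality \eqref{ineqn:2.4} to rewrite $\mu(C_n(\underline{x}))^2$ in terms of $e^{-2nP_G(\phi)+2S_n\phi(\underline{x})}$, reducing $Z_n(1)$ to a partition-style sum for the doubled potential $2\phi$. Since $2\phi$ is again locally H\"older on the same shift, \autoref{Sarig CMS} applies to it and identifies the exponential growth rate of that sum with $P_G(2\phi)$.

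First I would pin down the cylinder-wise estimate: for any $n$-cylinder $C$ and any chosen $\underline{x}_C \in C$, squaring \eqref{ineqn:2.4} gives
\[
c_\phi^{-2}\,e^{-2nP_G(\phi)+2S_n\phi(\underline{x}_C)} \le \mu(C)^2 \le c_\phi^{2}\,e^{-2nP_G(\phi)+2S_n\phi(\underline{x}_C)}.
\]
By \autoref{Def:2.5}, $|S_n\phi(\underline{x})-S_n\phi(\underline{y})| \le M_\phi\,\zeta/(1-\zeta)$ uniformly for $\underline{x},\underline{y}$ in the same $n$-cylinder, so the choice of $\underline{x}_C$ only affects $e^{2S_n\phi(\underline{x}_C)}$ by a uniform constant. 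Summing over $C \in \mathcal{C}_n$ yields
\[
Z_n(1) \approx e^{-2nP_G(\phi)} \sum_{C \in \mathcal{C}_n} e^{2S_n\phi(\underline{x}_C)}.
\]

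Next I would match the cylinder sum to a Ruelle-operator expression for $2\phi$. Writing
\[
\mathcal{L}_{2\phi}^n \mathbbm{1}(\underline{z}) = \sum_{\sigma^n \underline{y} = \underline{z}} e^{2S_n\phi(\underline{y})} \approx \sum_{\substack{C=[y_0,\dots,y_{n-1}] \in \mathcal{C}_n \\ A_{y_{n-1},z_0}=1}} e^{2S_n\phi(\underline{x}_C)},
\]
the big image and preimage property provides, for each tail symbol $y_{n-1}$, some $b \in S$ with $A_{y_{n-1},b}=1$; picking representatives $\underline{z}_b \in [b]$ for each $b \in S$ sandwiches the unrestricted cylinder sum between uniform multiples of $\sum_{b \in S} \mathcal{L}_{2\phi}^n \mathbbm{1}(\underline{z}_b)$. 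Finiteness of $P_G(2\phi)$ is automatic, since $\sum_C \mu(C)^2 \le 1$ combined with the Gibbs lower bound forces $\sum_C e^{2S_n\phi(\underline{x}_C)} \le c_\phi^{2}\,e^{2nP_G(\phi)}$, hence $P_G(2\phi) \le 2P_G(\phi) < \infty$. \autoref{Sarig CMS} then applies to $2\phi$ and produces an eigenfunction $h_{2\phi}$ of $\mathcal{L}_{2\phi}$ uniformly bounded away from $0$ and $\infty$ with eigenvalue $e^{P_G(2\phi)}$, whence $\mathcal{L}_{2\phi}^n \mathbbm{1}(\underline{z}) \approx e^{nP_G(2\phi)}$ uniformly in $\underline{z}$, and consequently $\sum_{C \in \mathcal{C}_n} e^{2S_n\phi(\underline{x}_C)} \approx e^{nP_G(2\phi)}$.

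Combining the two displays gives $Z_n(1) \approx e^{-2nP_G(\phi)+nP_G(2\phi)}$, and dividing $-\log Z_n(1)$ by $n$ both establishes existence of the limit and delivers the formula $H_2(\mu) = 2P_G(\phi) - P_G(2\phi)$. The main obstacle I anticipate is the BIP bookkeeping in the middle step: one has to verify that the correspondence between arbitrary $n$-cylinders and preimages of the chosen representatives $\underline{z}_b$ has uniformly bounded multiplicity, so that passing between unrestricted cylinder sums and the Ruelle sums $\mathcal{L}_{2\phi}^n \mathbbm{1}(\underline{z}_b)$ only costs a constant factor and does not distort the exponential rate.
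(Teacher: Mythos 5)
Your route differs from the paper's: the paper establishes existence via almost-subadditivity of $-\log Z_n(1)$, then proves the two one-sided bounds separately (the upper bound by citing Sarig's Lemma~4 for the cylinder sum, the lower bound by noting that each $n$-cylinder contains at most one period-$n$ point), whereas you obtain the two-sided estimate $Z_n(1)\approx e^{n(P_G(2\phi)-2P_G(\phi))}$ in a single pass by conjugating the cylinder sum through $\mathcal{L}_{2\phi}^n\mathbbm{1}$ and invoking the uniform boundedness of the eigenfunction $h_{2\phi}$ furnished by Theorem~\ref{Sarig CMS} applied to $2\phi$. Your version is in some ways cleaner and directly delivers the quantitative form $Z_n(1)\approx e^{-2n\alpha}$ that Lemma~\ref{Lemma:2.4} later needs. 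The BIP bookkeeping you flag as a worry does work out: every $C\in\mathcal{C}_n$ contributes to $\sum_{b\in S}\mathcal{L}_{2\phi}^n\mathbbm{1}(\underline{z}_b)$ at least once (its last symbol has an image in $S$ by BIP) and at most $\#S$ times, so the multiplicity is uniformly bounded and the exponential rate is preserved.

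There is, however, a circularity in your finiteness step that must be broken before the rest can run. You derive $\sum_C e^{2S_n\phi(\underline{x}_C)}\le c_\phi^2 e^{2nP_G(\phi)}$ and write ``hence $P_G(2\phi)\le 2P_G(\phi)$,'' but $P_G(2\phi)$ is defined via the partition function $P_n(2\phi,a)$, a sum over periodic orbits, not via cylinder sums; and the Ruelle-operator identification you set up to equate the cylinder-sum growth rate with $P_G(2\phi)$ already presupposes that Theorem~\ref{Sarig CMS} applies to $2\phi$ --- which requires the very finiteness you are trying to establish. The missing independent input is precisely the observation the paper uses for its lower bound: each $n$-cylinder contains at most one period-$n$ point, so $P_n(2\phi,a)$ is dominated, up to a local-H\"older distortion constant, by $\sum_{C\subseteq[a]}e^{2S_n\phi(\underline{x}_C)}\le c_\phi^2 e^{2nP_G(\phi)}$, and hence $P_G(2\phi)\le 2P_G(\phi)<\infty$. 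Once this line is inserted before you invoke Theorem~\ref{Sarig CMS} for $2\phi$, your argument is complete and stands as a genuinely different, and arguably more streamlined, proof of the lemma.
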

\begin{proof} 
Firstly, $H_2$ is clearly non-negative. By \eqref{ineqn:2.4}, for $B_1:=\sum_{k\geq1}var_k(\phi)$,
\[Z_{n+k}(1)=\sum_{C\in\mathcal{C}_{n+k}}\mu(C)^2\le c_\phi^2\sum_{C\in\mathcal{C}_{n+k}}\exp{\left(S_{n+k}\phi(\underline{x})-(n+k)P_G\right)}\le c_\phi^4e^{2B_1}Z_n(1)Z_k(1),\]
so $-\log Z_n(1)$ is almost subadditive, and the limit $\frac{\log Z_n(1)}{-n}$ exists, in particular, every subsequence converges to the same limit. Suppose $\underline{x}$ is a periodic point with period $k$, then for all $n$, 
\[\mu(C_{nk}(\underline{x}))^2\le Z_{nk}(1)=\sum_{C\in\mathcal{C}_{nk}}\mu(C)^2,\]
\[\liminf_{n\rightarrow\infty}\frac{-2\log c_\phi +2\left(S_{nk}\phi(\underline{x})-nkP_G\right)}{nk}\le\liminf_{n\rightarrow\infty}\frac{\log Z_{n}(1)}{n}.\tag{?}\]
Since $S_{nk}\phi(\underline{x})=nS_k\phi(\underline{x})$ and both $P_G$ and $S_k\phi(x)/k$ are finite, we get $\limsup_n\frac{\log Z_n(1)}{-n}<\infty$.

Combining the BIP property and locally H\"older property with \cite[Lemma 4]{sarig_1999} one can show that 
\[\limsup_{n\rightarrow\infty}\frac{1}{n}\log\sum_{C\in\mathcal{C}_n}\exp\left(\sup_{\underline{x}\in C}2S_n\phi(\underline{x})\right)\leq P_G(2\phi)\]
which implies \[\limsup_{n\rightarrow\infty}\frac{1}{n}\log\sum_{C\in\mathcal{C}}\mu(C)^2\leq \limsup_{n\rightarrow\infty}\frac1nc_\phi^2e^{-2nP_G(\phi)}\sum_{C\in\mathcal{C}_n}\exp\left(\sup_{\underline{x}\in C}2S_n\phi(\underline{x})\right)\le P_G(2\phi)-2P_G(\phi).\]

Also for each $C\in\mathcal{C}_n$, there is at most one $\underline{x}\in C$ such that $\sigma^n\underline{x}=\underline{x}$, thus
\begin{align*}\sum_{C\in\mathcal{C}_n}\mu(C)^2\geq& c_{\phi}^{-1}e^{-2nP_G(\phi)}\sum_{C\in\mathcal{C}_n}\exp{\left(\sup_{\underline{x}\in C}2S_n(\phi(\underline{x}))\right)}\\
\geq &c_{\phi}^{-1}e^{-2nP_G(\phi)}\sum_{\substack{\sigma^n\underline{x}=\underline{x}\in C\\C\in\mathcal{C}_n,\,C\subseteq[a]}}\exp\left(2S_n\phi(\underline{x})\right)=c_{\phi}^{-1}e^{-2nP_G(\phi)}P_n(2\phi,a),\end{align*}
which implies 
\[\liminf_{n\rightarrow\infty}\frac{1}{n}\log\sum_{C\in\mathcal{C}_n}\mu(C)^2\geq P_G(2\phi)-2P_G(\phi).\]
Then putting the inequalities for $\limsup$ and $\liminf$ together,\[H_2=\lim_{n\rightarrow+\infty}\frac{\log{\sum_{C\in\mathcal{C}_n}\mu(C)^2}}{-n}=2P_G(\phi)-P_G(2\phi).\qedhere\]
\end{proof}

\begin{remark}
It is also easy to see that $H_2(\mu)\le 2h_\nu$ for all invariant probability measure $\nu$: for all $\underline{x}$ and all $n\in\mathbbm{N}$,
\[\frac{\log{Z_n(1)}}{-n}\le\frac{2\log\nu(C_n(\underline{x}))}{-n}.\]
By the Shannon-McMillan-Breiman Theorem, the left hand side converges to $2h_{\nu}$ for almost every $\underline{x}$, therefore $\limsup_n \frac{\log Z_n(1)}{-n}\le h_\nu$. So the R\'enyi entropy is finite whenever the measure-theoretic entropy of $\nu$ is finite.
\end{remark}

For simplicity, denote $\alpha=\frac{H_2}{2}$. The following lemma is crucial for approximating the values of $Z_n(t)$. 

\begin{lemma}\label{Lemma:2.4}
For countable Markov shifts satisfying the assumptions of Theorem~\ref{Sarig CMS}, for $\mu$ the invariant Gibbs measure, we have $\alpha>0$ and \[Z_k(1)=\sum_{C\in\mathcal{C}_k}\mu(C)^2\approx e^{-2k\alpha},\]and for each $t>2$,\[Z_k(t-1)=\sum_{C\in\mathcal{C}_k}\mu(C)^t\preceq e^{-tk\alpha}.\]
\end{lemma}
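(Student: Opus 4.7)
The plan is to establish the three assertions---positivity of $\alpha$, the two-sided comparison $Z_k(1)\approx e^{-2k\alpha}$, and the upper bound for $Z_k(t-1)$ when $t>2$---in that order, each one building on the previous. First I would combine \autoref{Lemma:psi-mixing} (exponential $\psi$-mixing of the Gibbs measure, hence summable $\psi$) with \autoref{exp decay psi mixing} to obtain a uniform bound $\mu(C)\le\rho^n$ for some $\rho\in(0,1)$ and every $C\in\mathcal{C}_n$. This forces $Z_n(1)\le\rho^n$, so $H_2\ge -\log\rho>0$ and hence $\alpha=H_2/2>0$.

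For the two-sided comparison $Z_k(1)\approx e^{-2k\alpha}$ I would argue the two inequalities separately. The lower bound $Z_k(1)\succeq e^{-2k\alpha}$ falls out of the almost sub-multiplicative estimate $Z_{n+k}(1)\le c_\phi^4 e^{2B_1}Z_n(1)Z_k(1)$ already derived inside the proof of \autoref{lemma:RenyiEntropy}: writing $a_n:=-\log Z_n(1)$, the sequence $a_n-\log(c_\phi^4 e^{2B_1})$ is superadditive, so Fekete's lemma yields $a_n\le nH_2+\log(c_\phi^4 e^{2B_1})$, equivalently $Z_n(1)\succeq e^{-nH_2}=e^{-2n\alpha}$. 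The matching upper bound is the real work; my plan is to establish an almost super-multiplicative inequality
\[ Z_{n+R+k}(1)\ge c\,Z_n(1)Z_k(1)\qquad\text{for all }n,k\ge 1, \]
with some uniform $R\in\mathbbm{N}$ and $c>0$, after which the dual Fekete iteration gives $Z_n(1)\le c^{-1}e^{-(n+R)H_2}\preceq e^{-nH_2}$. The super-multiplicativity itself is where BIP and topological mixing enter: given $C=[c_0,\dots,c_{n-1}]$ and $D=[d_0,\dots,d_{k-1}]$, the BIP property supplies $b_2,b_1$ in the finite BIP set $S$ with $c_{n-1}b_2$ and $b_1d_0$ both admissible, while topological mixing combined with the finiteness of $S$ furnishes a uniform integer $N$ together with an admissible connecting word of length $N$ between every pair in $S\times S$. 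The resulting concatenation $CwD\in\mathcal{C}_{n+R+k}$ (with $R=N+1$) satisfies $\mu(CwD)\ge c\,\mu(C)\mu(D)$ by the Gibbs property (\autoref{Def:2.6}) together with the local H\"older control of $\phi$ (\autoref{Def:2.5}); injectivity of the map $(C,D)\mapsto CwD$ then yields the claim by squaring and summing.

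Once $Z_k(1)\approx e^{-2k\alpha}$ is in hand, the third assertion is essentially immediate. The pointwise estimate $\sup_{C\in\mathcal{C}_k}\mu(C)\le\sqrt{Z_k(1)}\preceq e^{-k\alpha}$ holds simply because the largest atom is controlled by the $\ell^2$-norm of the sequence of atoms, so for $t>2$,
\[ Z_k(t-1)=\sum_{C\in\mathcal{C}_k}\mu(C)^{t-2}\mu(C)^2\le\Bigl(\sup_C\mu(C)\Bigr)^{t-2}Z_k(1)\preceq e^{-(t-2)k\alpha}e^{-2k\alpha}=e^{-tk\alpha}. \]
The crux of the whole argument is therefore the upper bound on $Z_k(1)$: the limit statement $H_2=\lim -\log Z_n(1)/n$ in \autoref{lemma:RenyiEntropy} is purely asymptotic and does not by itself produce the non-asymptotic comparison encoded by $\approx$; only the BIP hypothesis combined with topological mixing lets one manufacture uniformly bounded admissible bridges between arbitrary pairs of cylinders, and it is precisely this bridge construction that promotes the one-sided almost sub-multiplicativity into the two-sided comparison demanded by the symbol $\approx$.
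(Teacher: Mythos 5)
Your proposal is correct, but it takes a genuinely different route from the paper. For the $\approx$ comparison and the $\preceq$ bound, the paper's proof is a citation: it defers to Lemma 2.13 of Collet--Giampieri--Isola (proved there for finite alphabets) together with Haydn--Vaienti's Theorem 1(IV), observing that the latter supplies the missing ingredient (exponential decay of cylinder measures) needed to push the argument to the countable-alphabet setting. The only part the paper proves directly is $\alpha>0$, and your argument for that is identical (bound $Z_n(1)\le b_n\sum\mu(C)\le\rho^n$ via Lemma~\ref{exp decay psi mixing}). You instead construct the whole thing from scratch. The sub-multiplicative half, $Z_{n+k}(1)\le\text{const}\cdot Z_n(1)Z_k(1)$, followed by Fekete to turn the limit into a non-asymptotic lower bound, is a clean use of what is already on display in the proof of Lemma~\ref{lemma:RenyiEntropy}. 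The super-multiplicative half is the genuinely new content: you use BIP to attach a symbol from the finite set $S$ to each endpoint, topological mixing plus finiteness of $S$ to pick a uniform bridge length $N$ and bridge words, and then the Gibbs inequality together with local H\"olderness (which bounds $S_R\phi$ over the bridge uniformly, since $\phi$ has bounded oscillation on each 1-cylinder and there are only finitely many bridges) to get $\mu(CwD)\ge c\,\mu(C)\mu(D)$; injectivity of $(C,D)\mapsto CwD$ then gives $Z_{n+R+k}(1)\ge c^2Z_n(1)Z_k(1)$, and a shifted Fekete argument delivers the matching upper bound. Your derivation of the $t>2$ case from $\sup_{C\in\mathcal{C}_k}\mu(C)\le\sqrt{Z_k(1)}$ is also correct and elementary. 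The trade-off is clear: the paper is shorter but leans on two external results and asks the reader to check that the proofs transfer, while yours is longer but self-contained and makes explicit exactly where BIP and topological mixing are used — which is arguably illuminating, since those are precisely the hypotheses of Theorem~\ref{Sarig CMS} and the lemma would fail without them.
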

\begin{proof}
    Let $b_n:=\max_{C_\in\mathcal{C}_n}\mu(C)$, then by Lemma~\ref{exp decay psi mixing} $\sum_{C\in\mathcal{C}_n}\mu(C)^2\le b_n\sum\mu(C)\le \rho^n$, hence 
    \[\liminf_{n\rightarrow\infty}\frac{\log Z_n(1)}{-n}\ge\liminf_{n\rightarrow\infty}\frac{-\log b_n}{-n}\ge-\log\rho>0.\]   
    The approximation formulae are from \cite[Lemma 2.13]{10.2307/30243633}. They were originally proved for finite alphabets and the proof remains valid if one combines with \cite[Theorem 1 (IV)]{haydn_vaienti_2010} which holds whenever the relevant measure admits exponential decay of cylinder measures. 
\end{proof}
\vspace{4mm}

\section{Proof of \autoref{Gibbs Cylinder}}
We will use dynamical Borel-Cantelli argument to show separately:
\[\limsup_{n\rightarrow+\infty}\frac{M_n(\underline{x})}{\log{n}}\leq\frac{2}{H_2}\tag{3.1.1}\label{ineqn:3.1.1}\] and \[\liminf_{n\rightarrow+\infty}\frac{M_n(\underline{x})}{\log{n}}\geq\frac{2}{H_2}\tag{3.1.2}\label{ineqn:3.1.2}\]for $\mu$-almost every $\underline{x}\in\Sigma_A$. Together they give \[\lim_{n\rightarrow\infty}\frac{M_n(\underline{x})}{\log{n}}=\frac{2}{H_2}.\]
\subsection{Upper Bound (\ref{ineqn:3.1.1})}
\begin{proof}
Set \[r_n=\frac{1}{\alpha-\varepsilon}\left(\log{n}+\log\log{n}\right).\] 
As $M_n(\underline{x})=r_n$ implies the return time of some iterate of $\underline{x}$ under $\sigma$ to some $r_n$-cylinder is strictly less than $n$, we need to approximate the size of short return sets in the system in order to apply Borel-Cantelli Lemmas to obtain almost everywhere statements. Hence, as in \cite{haydn_vaienti_2010} and \cite{10.2307/30243633}, we intend to solve this by considering different cases of overlapping between $r_n$-substrings in $\underline{x}$.  
\subsubsection*{Overlapping Analysis}\label{Overlapping Analysis}
Let $r_n$ be given; if $r_n$ is not an integer, we simply take the closest integer since when $n\rightarrow+\infty$, $r_n\rightarrow+\infty$ it will not make any difference in terms of limiting behaviours. Let us define the following auxiliary sets.
\[S_{k}(r_n)=\left\{\underline{x}\in\Sigma_A:\sigma^{k}\underline{x}\in C_{r_n}(\underline{x})\right\}.\]
In other words, it is the set of points whose return time of $\underline{x}$ to the $r_n$-cylinder containing itself is $k$, and 
\[\mu\left(\left\{\underline{x}:\exists i,k\text{ such that }d(\sigma^i\underline{x},\sigma^{i+k}\underline{x})\leq e^{-r_n}\right\}\right)\leq\mu\left(\bigcup_{i=0}^{n-1}\bigcup_{k=1}^{n-i-1}\sigma^{-i}S_{k}(r_n)\right) .\tag{3.2}\label{ineqn:3.2}\]
In order to obtain good estimates of $\mu(S_k(r_n))$, we consider three separate cases according to the range of $k$.\\
Let 
\[\Sigma_0=\Sigma_0(n):=\mu\left(\bigcup_{i=0}^{n-1}\bigcup_{k=1}^{\lfloor r_n/2\rfloor}\sigma^{-i}S_{k}(r_n)\right).\]
Similarly, set \[\Sigma_1:=\mu\left(\bigcup_{i=0}^{n-1}\bigcup_{k=\lfloor r_n/2\rfloor+1}^{r_n}\sigma^{-i}S_{k}(r_n)\right),\] and
\[\Sigma_2:=\mu\left(\bigcup_{i=0}^{n-1}\bigcup_{k=r_n+1}^{n-i-1}\sigma^{-i}S_k(r_n)\right).\]
Moreover,
\[\mu(\{M_n>r_n\})\leq\mu(M_n\geq r_n)\leq \Sigma_0+\Sigma_1+\Sigma_2.\tag{3.2'}\label{ineqn:3.2'}\]

\subsubsection*{$\Sigma_0$: return time $1\leq k\leq\lfloor r_n/2\rfloor$}\label{S_0 estimation}
Let $\omega_k=\lfloor \frac{r_n}{k}\rfloor$ and $0\leq\gamma_k<k$ so that $r_n=k\omega_k+\gamma_k$. Then if $\underline{x}\in \sigma^{-i}S_k(r_n)$, $x_j=x_l$ if $j=l$ (mod $k$) for all $j,\,l\in[i,i+r_n+k-1]$, therefore $\sigma^i\underline{x}$ has the following form:
\[\sigma^i\underline{x}=\left(x_{i}^{k},x_{i+k}^{k},
\dots,x_{i+k\omega_k}^k,x_{i+k\omega_k+1}^{\gamma_k},\dots\right)
=\left((x_i,\dots,x_{i+k-1})^{\omega_k+1},x_{i+k\omega_k+1}^{\gamma_k},\dots\right),\]
that is, a $k$-word $(x_i,\dots,x_{i+k-1})$ will be repeated fully for $\omega_k+1$ times, followed by a truncated $\gamma_k$-word with the same initial symbols. Also, for each $k\leq\lfloor r_n/2\rfloor$, there is at least one $\ell=\ell_k\in\left[\lceil r_n/4\rceil,\lfloor r_n/2\rfloor\right]$ such that $\ell$ is a multiple of $k$, meaning that $\underline{x}\in \sigma^{-i}S_k(r_n)\subseteq \sigma^{-i}S_{\ell}(r_n)$ where the $\ell$ word is fully repeated $\omega_{\ell_k}+1\leq5$ times, i.e. we may interpret $\sigma^i\underline{x}$ as\[\sigma^i(\underline{x})=\left(\left(x_i,x_{i+1}\dots,x_{i+\ell_k-1}\right)^{\omega_{\ell_k+1}},x_i^{\gamma_k},\dots\right).\] Therefore for each $k\leq\lfloor r_n/2\rfloor$, by the quasi-Bernoulli property, 
\begin{align*}
\mu\left(\{\underline{x}:\sigma^{i+k}\underline{x}\in C_{r_n}(\sigma^i\underline{x})\}\right)=\mu(S_k(r_n))\leq\mu(S_{\ell_k}(r_n))\leq B^{6}\sum_{C_{\ell_k}\in\mathcal{C}_{\ell}}\mu(C_{\ell})^{\omega_{\ell_k}+1}\rho^{\gamma_{k}}\leq B^6Z_{\ell_k}(\omega_{\ell_k}),
\end{align*}
where $\rho$ is a given by Lemma~\ref{exp decay psi mixing}. As $r_n\leq\ell_k\omega_{\ell_k}\leq r_n+1$, $e^{-\alpha\ell_k\omega_k}\leq e^{-\alpha r_n}$, by definition of $r_n$, \[Z_{\ell_k}(\omega_k)\preceq e^{-\alpha r_n}\leq \exp{(-\log{n}-\log\log{n})}\leq\frac{1}{n\log{n}},\]
For each $i\leq n-1$ and $1\leq k\leq \lfloor r_n/2\rfloor$, we can omit $\sigma^{-i}S_k(r_n)$ if $2k\leq n-i-1$ to avoid overcounting the redundant terms because $\sigma^{-i}S_k(r_n)\subseteq \sigma^{-i}S_{2k}(r_n)$, hence for $\Sigma_0$ we only need to consider points $\underline{x}$ such that $\sigma^i\underline{x}$ has short return time and $i\geq n-r_n$. 
As $r_n$ is in the scale of $\log{n}$, we may choose $n$ large such that $r_n\leq n^{1/2}$ so that by Lemma~\ref{Lemma:2.4},
\begin{align*}\Sigma_0\leq B^6\sum_{k=1}^{\lfloor r_n/2\rfloor}kZ_{\ell_k}(\omega_{\ell_k})
\preceq B^6r_n^2e^{-\alpha r_n}
\leq\frac{B^6r_n^2}{n\log{n}}
\preceq\frac{1}{\log{n}}\tag{3.3}\label{ineqn:3.3}.
\end{align*}

\subsubsection*{$\Sigma_1$: return time $\lfloor r_n/2\rfloor+1\leq k\leq r_n$}\label{S_1 estimation}
In this case, $\underline{x}\in \sigma^{-i}S_k(r_n)$ implies $x_j=x_l$ if $j=l$ (mod $k$) for all $j,l\in[i,i+r_n+k-1]$, hence $\sigma^i\underline{x}$ has the form
\[\sigma^i\underline{x}=\left(x_i^{r_n-k},x_{i+r_n-k}^{2k-r_n},x_{i+k}^{r_n-k},x_{i+r_n}^{2k-r_n}
,x_{i+2k}^{r_n-k}\dots\right)\\=\left(x_i^{r_n-k},x_{i+r_n-k}^{2k-r_n},x_{i}^{r_n-k},x_{i+r_n-k}^{2k-r_n},x_i^{r_n-k}\dots\right)\]that is, the $(r_n-k)$ word starting from $x_i$ is repeated three times, separated by two identical $(2k-r_n)$ words. Hence by Lemma~\ref{Lemma:2.4} and the quasi-Bernoulli property, the following upper bound for $\mu(\sigma^{-i}S_k(r_n))$ holds:
\[\mu(\sigma^{-i}S_k(r_n))=\mu(S_k(r_n))\leq B^6\sum_{\substack{C\in\mathcal{C}_{r_n-k}\\D\in\mathcal{C}_{2k-r_n}}}\mu(C)^3\mu(D)^2=B^6Z_{r_n-k}(2)Z_{2k-r_n}(1).\]
As $\alpha>0$, 
\begin{align*}
\Sigma_1\leq& B^6\sum_{k=\lfloor r_n/2\rfloor+1}^{r_n}(n-k)Z_{r_n-k}(2)Z_{2k-r_n}(1)
\preceq B^6\sum_{k=\lfloor r_n/2\rfloor+1}^{r_n}(n-k)e^{-\alpha3(r_n-k)}e^{-\alpha2(2k-r_n)}\\
=&B^4\sum_{k=\lfloor r_n/2\rfloor+1}^{r_n}(n-k)e^{-\alpha(r_n+k)}
\leq B^6e^{-\frac{3}{2}r_n\alpha}\sum_{k=\lfloor r_n/2\rfloor+1}^{r_n}(n-k)
\preceq r_n ne^{-\frac{3}{2}\alpha r_n}\\
\leq&\frac{nr_n}{(n\log{n})^{3/2}}
\leq\frac{n^{1/2}n}{(n\log{n})^{3/2}}\leq\frac{1}{\log{n}}\tag{3.4}\label{ineqn:3.4},
\end{align*}

\subsubsection*{$\Sigma_2$: return time $r_n+1\leq k\leq n-i-1$}
In this case, $k-r_N\geq1$ and $\underline{x}\in \sigma^{-i}S_k(r_n)$ implies $x_i^{r_n}$, the $r_n$-word starting from position $i$ of $\underline{x}$,  is repeated from the $i+k$ entry without any overlapping with itself, i.e.
\[\sigma^i\underline{x}=\left(x_i^{r_n},x_{i+r_n}^{k-r_n},x_{i+k}^{r_n},\dots\right)
=\left(x_i^{r_n},x_{i+r_n}^{k-r_n},x_{i}^{r_n},\dots\right)\]
then by the $\psi$-mixing condition, \[\mu(\sigma^{-i}S_k(r_n))\leq(1+\psi(k-r_n))\sum_{C\in\mathcal{C}_{r_n}}\mu(C)^2=(1+\psi(k-r_n))Z_{r_n}(1).\]\\therefore
\[\Sigma_2\leq\sum_{k=r_n+1}^{n-1}(n-k)\mu(S_k(r_n))\leq\sum_{k=r_n+1}^{n-1}(n-k)(1+\psi(k-r_n))Z_{r_n}(1).\]
Given that $(1+\psi(k))$ is monotonically decreasing in $k$, 
\begin{align*}
    \Sigma_2\approx &\,e^{-2\alpha r_n}\sum_{k=r_n+1}^{n-1}(n-k)(1+\psi(k-r_n))
    \leq(1+\psi(1))e^{-2\alpha r_n}\sum_{k=r_n+1}^{n-1}(n-k)\\
    \leq&(1+\psi(1))e^{-2\alpha r_n}n^2 \leq\frac{1+\psi(1)}{(\log{n})^2}\leq\frac{1+\psi(1)}{\log{n}}\tag{3.5}\label{ineqn:3.5}
\end{align*}
Then, combining \eqref{ineqn:3.2'}-\eqref{ineqn:3.5}, there is some constant $K_1>0$ independent of $n$ such that \[\mu\left(\{M_n>r_n\}\right)\leq K_1\frac{1}{\log{n}}.\] Using the technique in the proof of \cite[Theorem 5]{barros2019shortest}, picking a subsequence $n_k=e^{\lceil k^2\rceil}$, then for all $k$ large enough,\[\mu(\{M_{n_k}>r_{n_k}\})\leq K_1\frac{1}{k^2},\]then by Borel-Cantelli Lemma, for $\mu$-almost every $\underline{x}\in\Sigma_A$,
\[M_{n_k}(\underline{x})\leq r_{n_k}\] which implies for all $k$ large enough, \[\frac{M_{n_k}(\underline{x})}{\log{n_k}}\leq\frac{1}{\alpha-\varepsilon}\left(1+\frac{\log\log{n_k}}{\log{n_k}}\right).\]Now taking the limsup of the inequality above, and since $M_n(\underline{x})$ is non-decreasing in $n$ for all $\underline{x}$, for each $n$, there is a unique $k$ such that $n_k\leq n< n_{k+1}$ with 
\[\frac{\log{n_k}}{\log{n_{k+1}}}\cdot\frac{M_{n_k}(\underline{x})}{\log{n_{k}}}\leq\frac{M_n(\underline{x})}{\log{n}}\leq\frac{M_{n_{k+1}}(\underline{x})}{\log{n_{k+1}}}\cdot\frac{\log{n_{k+1}}}{\log{n_k}}\tag{3.6}\label{ineqn:3.6},\]
the following inequality holds for all $\varepsilon>0$ small,\[\limsup_{n\rightarrow+\infty}\frac{M_n(\underline{x})}{\log{n}}=\limsup_{n\rightarrow+\infty}\frac{M_{n_k}(\underline{x})}{\log{n_k}}\leq\frac{1}{\alpha-\varepsilon}\]since \[\lim_{k\rightarrow+\infty}\frac{\log{n_{k+1}}}{\log{n_{k}}}=1,\text{ and }
\lim_{k\rightarrow+\infty}\frac{\log\log{n_k}}{\log{n_k}}=0.\]
Then \eqref{ineqn:3.1.1} is proved by letting $\varepsilon\rightarrow0$
\end{proof}
\subsection{Lower bound (\ref{ineqn:3.1.2})}\label{proof of 3.1.2}
\begin{proof}
    We apply a similar second-moment analysis as in the proof of \cite[Theorem 4.1]{10.2307/30243633}. Let \[r_n=\frac{1}{\alpha+\varepsilon}\left(\log{n}+\beta\log\log{n}\right)\]
    for some uniform constant $\beta<0$ to be determined later.
Since  $\sigma^{i+k}\underline{x}\in C_{r_n}(\sigma^i\underline{x})$ if and only if $\underline{x}\in\sigma^{-i}S_k(r_n)$, then we can define the random variable $\mathcal{S}_n$: \[\mathcal{S}_n(\underline{x}):=\sum_{i=0}^{n-2r_n-1}\sum_{k=2r_n}^{n-i-1}\mathbbm{1}_{C_{r_n}(\sigma^i\underline{x})}(\sigma^{i+k}\underline{x})=\sum_{i=0}^{n-2r_n-1}\sum_{k=2r_n}^{n-i-1}\mathbbm{1}_{\sigma^{-i}S_k(r_n)}(\underline{x}),\tag{3.7}\label{rv:3.7}\]
which counts the number of times that $\underline{x}$ is belongs to some $\sigma^{-i}S_k(r_n)$. 
As $M_n(\underline{x})<r_n$ implies for all $0\leq i\leq n-1$, $1\leq k\leq n-i-1$, $\underline{x}\notin \sigma^{-i}S_k(r_n)$, and in particular not in the $\sigma^{-i}S_k(r_n)$ sets with $k\geq 2r_n$, therefore \[\{M_n(\underline{x})<r_n\}\subseteq\{S_n(\underline{x})=0\}\] and by Paley-Zygmund's inequality \cite{PZ1932},
\[\mu(\{M_n(\underline{x})<r_n\})\leq\mu(\{\mathcal{S}_n(\underline{x})=0\})\leq \frac{Var[\mathcal{S}_n]}{\mathbbm{E}[\mathcal{S}_n^2]}\leq\frac{Var[\mathcal{S}_n]}{\mathbbm{E}[\mathcal{S}_n]^2}.\tag{3.8}\label{ineqn:3.8}\]
By definition of $\sigma^{-i}S_k(r_n)$ with $k\geq2r_n$, this set corresponds to the set of points in which an $r_n$-word repeats itself at least once with at least an $r_n$ gap, therefore we have the following lower bound using the $\psi$-mixing property, 
\begin{align*}\mu\left(\{C_{r_n}(\sigma^i\underline{x})=C_{r_n}(\sigma^{i+k}\underline{x})\}\right)=&\mu\left(\sigma^{-i}S_k(r_n)\right)=\sum_{C\in\mathcal{C}_{r_n}}\mu(C\cap\sigma^{-k}C)\\
\geq&(1-\psi(k-r_n))\sum_{C\in\mathcal{C}_{r_n}}\mu(C)^2\geq(1-\psi(r_n))Z_{r_n}(1),\end{align*}
therefore 
\[\mathbbm{E}[\mathcal{S}_n]\geq\frac{1}{2}(1-\psi(r_n))(n-2r_n)^2Z_{r_n}(1)\tag{3.9}\label{ineqn:3.9}.\]
Next, we need to consider \[\mathbbm{E}[\mathcal{S}_n^2]=\sum_{i,j=0}^{n-2r_n-1}\sum_{k=2r_n}^{n-i-1}\sum_{l=2r_n}^{n-j-1}\mu(\sigma^{-i}S_k(r_n)\cap \sigma^{-j}S_l(r_n))\tag{3.10}\label{ineqn:3.10}.\]
Define the index set \[F:=\left\{(i,j,k,l)\in\mathbbm{N}^4:0\leq i,\,j\leq n-2r_n-1,2r_n\leq k\leq n-i-1,\,2r_n\leq l\leq n-j-1\right\},\]
then 
\[\mathbbm{E}[\mathcal{S}_n^2]=\sum_{(i,j,k,l)\in F}\mu(\sigma^{-i}S_k(r_n)\cap \sigma^{-j}S_l(r_n)),\tag{3.10'}\label{ineqn:3.10'}\]
and the cardinality of $F$ satisfies
\[\#F=\left(\sum_{i=2r_n}^{n-2r_n-1}n-i\right)\left(\sum_{j=2r_n}^{n-2r_n-1}n-j\right)\leq\frac{1}{4}(n-2r_n)^4.\]
Define the counting function by
\[\theta:F\rightarrow\mathbbm{N},\hspace{4mm}\theta(i,j,k,l)=\sum_{\substack{a\in\{i,i+k\}\\b\in\{j,j+l\}}}\mathbbm{1}_{(a-r_n,a+r_n)}(b),\]i.e. it counts the occurrences that two indices in $\{i,j,i+k,j+l\}$ are $r_n$-close to each other; $\theta>0$ translates to overlapping between some $r_n$ words, e.g. $|i-j|<r_n$ implies the $r_n$ word $x_i^{r_n}$ overlaps with the $r_n$ word $x_j^{r_n}$, and both $r_n$-strings are repeated later.\\
By our definition of $\mathcal{S}_n$, for each quadruple $(i,j,k,l)$, necessarily $k,l\geq2r_n$ which implies
\[\theta(i,j,k,l)\leq2,\hspace{4mm}\text{ $\forall(i,j,k,l)\in F$},\]which allows us to split \eqref{ineqn:3.10'} again into 3 components,
\[\mathbbm{E}[\mathcal{S}_n^2]=\left(\sum_{F_0}+\sum_{F_1}+\sum_{F_2}\right)\mu\left(\sigma^{-i}S_k(r_n)\cap \sigma^{-j}S_l(r_n)\right),\]
where $F_t=\{(i,j,k,l)\in F:\theta(i,j,k,l)=t\}$. \\Clearly,
\[\#F_0\leq\#F\leq\frac{1}{4}(n-2r_n)^4.\]For each $(i,j,k,l)\in F_1$, if we fix any three indices, for example, if $i,j,k$ are fixed, $j+l$ can be $r_n$-close to either $i$ or $i+k$ as it is automatically $2r_n$- apart from $j$, hence there are at most $4r_n$ choices for the remaining index $l$. Hence \[\#F_1\leq 2r_n(n-2r_n)^3,\]
and similarly if we fix any two of $i,j,k,l$ in $F_2$, there are at most $2r_n^2$ choices for the remaining two indices, therefore\[\#F_2\leq2r_n^2(n-2r_n)^2.\]
\subsubsection*{Contributions of indices in $F_0$:}
We will consider the sum over indices in $F_0$ first. Since $(i,j,k,l)\in F_0$ implies no overlapping, $\underline{x}\in \sigma^{-i}S_k(r_n)\cap \sigma^{-j}S_l(r_n)$ implies $x_i^{r_n}=x_{i+k}^{r_n}$ and $x_j^{r_n}=x_{j+l}^{r_n}$ while the symbols in these two $r_n$-strings are independent, e.g. when $i+k<j$, $\underline{x}$ has the following form:
\begin{align*}\sigma^i\underline{x}=&\left(x_i^{r_n},\dots,x_{i+k}^{r_n},x_{i+k+r_n},x_{i+k+r_n+1},\dots,x_j^{r_n},\dots,x_{j+l}^{r_n}\dots\right)
=\left(x_i^{r_n},\dots,x_i^{r_n},\dots,x_j^{r_n},\dots,x_j^{r_n},\dots\right).\end{align*}
Hence by $\psi$-mixing property
\[\mu(\sigma^{-i}S_k(r_n)\cap \sigma^{-j}S_l(r_n))\leq(1+\psi(\gamma_{i\,j\,k\,l}))^3Z_{r_n}(1)^2,\tag{3.11}\label{ineqn:3.11}\]where \[\gamma_{i\,j\,k\,l}=\min\left\{|a-b|-r_n:{a,b\in\{i,j,i+k,j+l\}}\right\}.\]
Let $F_0'\subseteq F_0$ be defined as \[F_0':=\{(i,j,k,l)\in F_0:\gamma_{ijkl}\geq r_n\},\]and $F_0'':=F_0\setminus F_0'.$ Notice also that $\#(F_0'')\leq 2r_n(n-2r_n)^3$.\\
Define the notation for any $G\subseteq F$,\[\mathbbm{E}[\mathcal{S}_n^2|G]:=\sum_{i,j,k,l\in G}\mu\left(\sigma^{-i}S_k(r_n)\cap \sigma^{-j}S_l(r_n)\right).\]
Then, using \eqref{ineqn:3.11}, \begin{align*}\mathbbm{E}[\mathcal{S}_n^2|F_0']&=\sum_{i,j,k,l\in F_0'}\mu\left(\sigma^{-i}S_k(r_n)\cap \sigma^{-j}S_l(r_n)\right)\leq(n-2r_n)^4(1+\psi(g))^3Z_{r_n}(1)^2,\end{align*}
By Lemma~\ref{Lemma:psi-mixing} $\psi(r_n)\leq r_n^{-1}$ for all $n$ large enough, then
\begin{align*}
    &\left(1+\psi(r_n)\right)^3-\left(1-\psi(r_n)\right)^2\leq\left(1+\psi(r_n)\right)^3-\left(1-\psi(r_n)\right)^3\\&=2\,\psi(r_n)\left(2+2\psi(r_n)^2+1-\psi(r_n)^2\right)
    \leq2r_n^{-1}(3+r_n^{-2})
    \leq8r_n^{-1}\leq\frac{1}{\log{n}}\tag{3.12}\label{ineqn:3.12}
\end{align*} using \eqref{ineqn:3.9} with \eqref{ineqn:3.12}, as $\#(F_0')\leq\frac{1}{4}(n-2r_n)^4$, for some constant $K_2>0$.
\begin{align*}
\frac{\mathbbm{E}[\mathcal{S}_n^2|F_0']-\mathbbm{E}[\mathcal{S}_n]^2}{\mathbbm{E}[\mathcal{S}_n]^2}&\leq\frac{(n-2r_n)^4Z_{r_n}(1)^2\left((1+\psi(r_n))^3-(1-\psi(r_n))^2\right)}{(n-2r_n)^4(1-\psi(r_n))^2Z_{r_n}(1)^2}\\
&\leq\frac{(1+\psi(r_n))^3-(1-\psi(r_n))^3}{(1-\psi(r_n))^2}\leq K_2\frac{1}{\log{n}},\tag{3.13}\label{ineqn:3.13}
\end{align*}

And for the sum over $F_0''$, the term $1+\psi(\gamma_{ijkl})$ in \eqref{ineqn:3.11} is uniformly bounded above by $1+\psi(0)$, and $1-\psi(r_n)\geq\frac{1}{2}$ for all $n$ sufficiently large, therefore 
\[\frac{\mathbbm{E}[\mathcal{S}_n^2|F_0'']}{\mathbbm{E}[\mathcal{S}_n]^2}\approx\frac{r_n(n-2r_n)^3(1+\psi(0))^3Z_{r_n}(1)^2}{(1-\psi(r_n))^2(n-2r_n)^4Z_{r_n}(1)^2}    \preceq \frac{r_n}{n-2r_n}\]
hence for some $K_3>0$ and all $n$ sufficiently large,
\[\frac{\mathbbm{E}[\mathcal{S}_n^2|F_0'']}{\mathbbm{E}[\mathcal{S}_n]^2}\leq K_3\frac{1}{\log{n}} \tag{3.14}\label{ineqn:3.14}\]

\subsubsection*{Contributions of indices in $F_1$:}
Next, for $(i,j,k,l)\in F_1$, without loss of generality, suppose only $|i-j|=r<r_n$, $i<j$ and $i+k<j+l$. The other cases are treated exactly the same since the order of the $r_n$-strings does not have any effects on estimations of the upper bounds for $\mu(\sigma^{-i}S_k(r_n)\cap \sigma^{-j}S_l(r_n))$. \\
An $\underline{x}\in \sigma^{-i}S_k(r_n)\cap \sigma^{-j}S_l(r_n)$ means $x_{i+r}=x_j,$
$x_{i+r+1}=x_{j+1},$ $\dots$, $x_{i+r_n}=x_{j+r}$, so $\sigma^i\underline{x}$ has the following form:
\begin{align*}
\sigma^i\underline{x}=&\left(x_i^{r_n-r},x_j^{r_n},\dots,x_i^{r_n},\dots,x_j^{r_n},\dots\right)=\left(x_i^{r},x_{j}^{r_n-r},x_{j+r_n}^r,\dots,x_i^{r},x_{j}^{r_n-r},\dots,x_{j}^{r_n-r},x_{j+r_n}^r,\dots\right).\end{align*}then using the quasi-Bernoulli property and Lemma~\ref{Lemma:2.4}, for $B>1$ the relevant quasi-Bernoulli constant,
\begin{align*}\mu(\sigma^{-i}S_k(r_n)\cap \sigma^{-j}S_l(r_n))&\leq B^{10}\sum_{\substack{A,B\in\mathcal{C}_r\\ C\in\mathcal{C}_{r_n-r}}}\mu(A)^2\mu(B)^2\mu(C)^3=C^{10}Z_{r}(1)^2Z_{r_n-r}(2)\\
&\preceq B^{10}e^{-4\alpha r_n}e^{-3\alpha(r_n-r)}\leq B^{10}e^{-3\alpha r_n}.\end{align*}
Recall that $r_n=\frac{1}{\alpha+\varepsilon}\left(\log{n}+\beta\log\log{n}\right)$ and
\[e^{-\alpha r_n}=\left(n(\log{n})^{\beta}\right)^{-\frac{\alpha}{\alpha+\varepsilon}}=n^{-\frac{\alpha}{\alpha+\varepsilon}}(\log{n})^{-\frac{\alpha\beta}{\alpha+\varepsilon}}\]
hence for all $n$ large enough such that \[\frac{n^{\frac{\alpha}{\alpha+\varepsilon}}}{(n-2r_n)}\leq1\tag{3.15}\label{ineqn:3.15},\] by \eqref{ineqn:3.9} and \eqref{ineqn:3.15} above, as $\beta<0$,
\begin{align*}
    \frac{\mathbbm{E}[\mathcal{S}_n^2|F_1]}{\mathbbm{E}[\mathcal{S}_n]^2}&\preceq\frac{2r_n(n-2r_n)^3B^{10}e^{-3\alpha r_n}}{(n-2r_n)^4(1-\psi(r_n))^2Z_{r_n}(1)^2}\\
    &\approx\frac{r_n e^{-3\alpha r_n}}{(n-2r_n)e^{-4\alpha r_n}}
    =\frac{1}{\alpha+\varepsilon}\frac{\log{n}+\beta\log\log{n}}{(n-2r_n)e^{-\alpha r_n}}\\
    &\preceq\frac{\log{n}}{(n-2r_n)e^{-\alpha r_n}}=\frac{(\log{n})^{1+\beta\frac{\alpha}{\alpha+\varepsilon}}n^{\frac{\alpha}{\alpha+\varepsilon}}}{n-2r_n}\\
    &\leq(\log{n})^{\left(1+\beta(1-\frac{\varepsilon}{\alpha+\varepsilon})\right)}.
\end{align*}
For all $0<\varepsilon\leq\alpha$ such that $1-\frac{\varepsilon}{\alpha+\varepsilon}\geq\frac{1}{2}$, one can choose \[\beta=-4,\]hence \[1+\beta\left(1-\frac{\varepsilon}{\alpha+\varepsilon}\right)\leq-1\]which is sufficient to conclude that for some constant $K_4$,\[\frac{\mathbbm{E}[\mathcal{S}_n^2|F_1]}{\mathbbm{E}[\mathcal{S}_n]^2}\leq K_4\frac{1}{\log{n}}.\tag{3.16}\label{ineqn:3.16}\]
\vspace{3mm}
\subsubsection*{Contributions of indices in $F_2$:}
Finally for indices in $F_2$, it is enough to know that for any $\underline{x}\in\sigma^{-i}S_k(r_n)\cap \sigma^{-j}S_l(r_n)$ there is some $r_n$-word repeated twice, so we can bound the measure of $\sigma^{-i}S_k(r_n)\cap \sigma^{-j}S_l(r_n)$ for each $(i,j,k,l)\in F_2$ by
\[B^4\sum_{C\in\mathcal{C}_{r_n}}\mu(C)^2\approx B^4e^{-2\alpha r_n}\tag*{by Lemma~\ref{Lemma:2.4}}.\]
Then for $\beta=-4$ and all $n$ verifying \eqref{ineqn:3.15}, as $(1-\psi(r_n))$ is uniformly bounded from below, by \eqref{ineqn:3.9},
\begin{align*}
    \frac{\mathbbm{E}[\mathcal{S}_n^2|F_2]}{\mathbbm{E}[\mathcal{S}_n]^2}&\approx\frac{2r_n^2(n-2r_n)^2B^{4}e^{-2\alpha r_n}}{(n-2r_n)^4(1-\psi(r_n))^2e^{-4\alpha r_n}}\\
    &\approx\frac{r_n^2}{(n-r_n)^2e^{-2\alpha r_n}}\preceq\frac{n^{\frac{2\alpha}{\alpha+\varepsilon}}}{(n-2r_n)^2}(\log{n})^{2+2\beta\frac{\alpha}{\alpha+\varepsilon}}\\
    &\leq(\log{n})^{2(1+\beta(1-\frac{\varepsilon}{\alpha+\varepsilon}))}\leq(\log{n})^{-2},
\end{align*}
and it follows that for some constant $K_5>0$, \[\frac{\mathbbm{E}[\mathcal{S}_n^2|F_2]}{\mathbbm{E}[\mathcal{S}_n]^2}\leq K_5\frac{1}{(\log{n})^2}.\tag{3.17}\label{ineqn:3.17}\]
Then, combining \eqref{ineqn:3.8} \eqref{ineqn:3.10'} \eqref{ineqn:3.13}-\eqref{ineqn:3.17}, there is some constant $K_6>0$ such that \[\mu(\{M_n<r_n\})\leq K_6\frac{1}{\log{n}},\]
hence we can repeat the trick of picking a subsequence $n_k=\lceil e^{k^2}\rceil$, and apply Borel-Cantelli Lemma to the sum $\sum_{k=1}^{\infty}\mu(\{M_{n_k}<r_{n_k}\})<+\infty$, which means for all $k$ large enough,
\[\frac{M_{n_k}(\underline{x})}{\log{n_k}}\geq\frac{1}{\alpha+\varepsilon}\left(1-\frac{4\log\log{n_k}}{\log{n_k}}\right),\]then taking the liminf on both sides and apply the arguments which validate \eqref{ineqn:3.6}, 
\[\liminf_{n\rightarrow+\infty}\frac{M_n(\underline{x})}{\log{n}}=\liminf_{n\rightarrow+\infty}\frac{M_{n_k}(\underline{x})}{\log{n_k}}\geq\frac{1}{\alpha+\varepsilon},\]
then \eqref{ineqn:3.1.2} is verified by letting $\varepsilon\rightarrow0$.
\end{proof}

\section{Gibbs-Markov interval maps.}\label{Interval Maps}
This section is inspired by \cite{barros2019shortest}, \cite{gouezel:hal-03788538} and \cite{HolNicTor}, and we will prove an asymptotic behaviour for $m_n(x)$ for $\mu$-almost every $x$, where the dynamics are interval maps with a Gibbs-Markov structure. Let us first define the measure theoretic dimension object.
\begin{definition}\label{Def:4.1}Suppose $T:X\rightarrow X$ is a measurable map with respect on the probability space $(X,\mu)$. 
Define the \textit{upper} and \textit{lower correlation dimension} of $\mu$ by \[\underline{D}_2(\mu)=\liminf_{r\rightarrow0}\frac{\log\int\mu(B(x,r))\,d\mu(x)}{\log{r}},\]\[\overline{D}_2(\mu)=\limsup_{r\rightarrow0}\frac{\log\int\mu(B(x,r))\,d\mu(x)}{\log{r}}\]respectively, and simply write $D_2(\mu)$ when the two limits coincide. \\
Clearly, $D_2(m)=1$ for $m$ the Lebesgue measure. \end{definition}

\begin{definition}\label{boundedlocalcomplexity}
Say the metric space $(X,d)$ satisfies the \textit{bounded local complexity} condition if there exists $C_0\in\mathbbm{N}$ such that for each $r>0$, there is $k(r)<\infty$, and $\{x^{r}_1,x^{r}_2,\dots,x_{k(r)}^{r}\}\subseteq X$ such that 
\[X\subseteq\bigcup_{p=1}^{k(r)}B(x^{r}_p,r)\]and each $x\in X$ belongs to at most $C_0$ elements of $\{B(x_p^{r},2r)\}_{p=1}^{k(r)}$. \\
Any compact subset of $\mathbbm{R}$ has bounded local complexity: compact implies totally bounded which gives $k(r)<\infty$ for all $r$ and $C_0$ can be chosen to be 4 because one can choose an $r$-net such that $d(x_i^r,x_j^r)\geq r$ for $i\neq j\in\{1,\dots,k(r)\}$. 
\end{definition}

\begin{definition}\textbf{(Piecewise expanding interval map)}\label{Def:4.2}
Let $X$ be a closed interval in $\mathbbm{R}$, $T:X\rightarrow X$ is a \textit{piecewise 
 expanding interval map} if there is a (at most) countable partition $\mathcal{P}=\{I_1,I_2,\dots\}$ for $T$ such that $T$ is differentiable on each $I_k$ and there is a uniform constant $\gamma>0$ such that $|DT_{I_k}|>\gamma$. An $n$-cylinder $[x_0,x_1,\dots,x_{n-1}]$ with respect to the partition $\mathcal{P}$ is given by\[[x_0,x_1,\dots,x_{n-1}]=\bigcap_{i=0}^{n-1}T^{-i}P_{x_i}.\]
    Then a point $x\in X$ has a symbolic representation $\underline{x}=\left(x_0,x_1,\dots\right)$ if $x\in\bigcap_{i=0}^{\infty}T^{-i}P_{x_i}$. Denote $\mathcal{P}_n:=\bigvee_{j=0}^{n-1}T^{-j}\mathcal{P}$.\\
We require the following conditions on the probability preserving system $(T,\mu)$.
\begin{itemize}
    \item 
    Say $T:X\rightarrow X$ has exponential decay of correlation for $\mathcal{BV}$ against $L^1$ observables, where $\mathcal{BV}:=\{\varphi\in L^1(m):\varphi \text{ has bounded variation.}\}$, if there is $\beta:\mathbbm{N}\rightarrow \mathbbm{R}$ with $\beta(n)=C_1e^{-c_1n}$ for some $C_1$, $c_1>0$, and for all $f,g:X\rightarrow\mathbbm{R}$, $f\in\mathcal{BV}$ and $g\in L^1$, 
\[\left|\int f\cdot g\circ T^n\,d\mu-\int f\,d\mu\int g\,d\mu\right|\leq\|f\|_{\mathcal{BV}}\|g\|_{1}\beta(n)\] where the norm $\|f\|_{\mathcal{BV}}:=\|f\|_{1}+TV(f)$, and $TV(f)$ stands for the total variation of $f$. In particular, if $f$ is an indicator function of some measurable $A\subseteq X$, $\|f\|_{\mathcal{BV}}=2$ and $\|f\|_1\leq1$.
    \item $T$ has bounded distortion, that is, there is $C_{bd}>0$ such that for all $x,y\in I\in\mathcal{P}_n$,
    \[\frac{1}{C_{bd}}\leq\frac{DT^n(x)}{DT^n(y)}\leq C_{bd}.\]
\end{itemize}

\end{definition}

\begin{definition} \textbf{({Gibbs-Markov} maps)}\label{Gibbs structure}
The map $T:X\rightarrow X$ is said to be \textit{Gibbs-Markov} if

\begin{itemize}
        \item $T(P)$ is a union of elements in $\mathcal{P}$ and $T|_{P}$ is injective for all $P\in\mathcal{P}$.
        \item Let $d_s$ be the symbolic metric as, $d_s(\underline{x},\underline{y})=e^{-\underline{x}\land \underline{y}}$. The map $\log{g|_P}$ is Lipschitz with respect to $d_s$ for each $P\in\mathcal{P}$, where $g=\frac{dm}{d(m\circ T)}$.
        \item  When the partition does in fact contains infinitely many continuous components, we assume there is $\delta_0$ such that $|T^n(I)|\geq \delta_0$ for all $I\in\mathcal{P}_n$, for all $n\geq1$. Otherwise, $\delta_0$ is a positive constant such that $|I|\geq\delta_0$ for all $I\in\mathcal{P}$. This is also known as the big image property. 
    \end{itemize} 
   
\end{definition}
\begin{remark}
For a Gibbs-Markov map defined above one can check that there is a measure $\mu$ verifying the inequality for $n$-cylinders as \eqref{ineqn:2.4} holds for $T$, and because the system is conjugate to a topological Markov shift satisfying the conditions of \autoref{Sarig CMS}, the invariant density $h=d\mu/dm$ is bounded from 0 and $+\infty$.
\end{remark}
\begin{definition}\label{Def:repeller}
    The \textit{repeller} $\Lambda$ of $T$ is defined as the following:
    \[\Lambda:=\left\{x\in I:T^k(x)\in \bigcup_{P\in\mathcal{P}}P\text{ for all }k\geq0\right\}.\]
\end{definition}
\vspace{5mm}
Before the proof our quantitative result for limiting behaviour of $m_n$, following the convention in \cite{gouezel:hal-03788538}, define the following quantities, \[\alpha(n)=(\log{n})^{2}\] 
\[m_n^{\leq}(x):=\min_{\substack{0\leq i<j<n\\|i-j|\leq \alpha(n)}}d\left(T^ix,T^jx\right),\]
\[m_n^{>}(x):=\min_{\substack{0\leq i<j<n\\|i-j|> \alpha(n)}}d\left(T^ix,T^jx\right),\]
\[m_n^{\gg}(x):=\min_{\substack{0\leq i\leq n/3\\2n/3\leq j<n}}d\left(T^ix,T^jx\right),\]
Then, \autoref{GM case} can be rephrased as the following.
\begin{theorem} 
\label{Upper Bound}
Let $T:X\rightarrow X$ be a piecewise expanding map as Definition~\ref{Def:4.2} with a Gibbs-Markov structure defined in Definition~\ref{Gibbs structure}, and $\mu$ its invariant Gibbs measure admitting exponential decay of correlations for $\mathcal{BV}$ against $L^1$ observables, then one has
\[\limsup_{n\rightarrow\infty}\frac{\log{m_n^{>}(x)}}{-\log{n}}\leq\frac{2}{\underline{D}_2}\tag{4.1.1}\label{ineqn:4.1.1}.\]
If $\mu$ is absolutely continuous to Lebesgue measure $m$, $\underline{D}_2=\overline{D}_2=1$, and
\[\limsup_{n\rightarrow\infty}\frac{\log{m_n^{\leq}(x)}}{-\log{n}}\leq\frac{2}{D_2},\tag{4.1.2}\label{ineqn:4.1.2}\]
for $\mu$-almost every $x\in\Lambda$. 
Then as $m_n(x)=\min\{m_n^{\leq},m_n^>(x)\}$, 
\[\limsup_{n\rightarrow+\infty}\frac{\log{m_n}(x)}{-\log{n}}\leq\frac{2}{D_2}=2.\]\end{theorem}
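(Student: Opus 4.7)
The plan is to prove each of \eqref{ineqn:4.1.1} and \eqref{ineqn:4.1.2} by a dynamical Borel--Cantelli argument along the subsequence $n_k = \lceil e^{k^2}\rceil$ for $m_n^>$ and directly in $n$ for $m_n^{\leq}$. For \eqref{ineqn:4.1.1}, fix $\varepsilon > 0$ and set $r_k = n_{k+1}^{-2/(\underline{D}_2 - 2\varepsilon)}$. Because the constraint $|i-j|>\alpha(n)$ weakens when $\alpha(\cdot)$ is replaced by $\alpha(n_k)$ and $j<n$ by $j<n_{k+1}$, one has $m_n^>(x) \geq M_k^*(x)$ for $n \in [n_k, n_{k+1}]$, where
\[M_k^*(x) := \min_{\substack{0 \leq i < j < n_{k+1} \\ |i-j| > \alpha(n_k)}} d(T^i x, T^j x).\]
By $\mu$-invariance and a union bound, $\mu(M_k^*(x) \leq r_k) \leq n_{k+1}^2 \sup_{\ell > \alpha(n_k)} \mu(d(x, T^\ell x) \leq r_k)$. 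Following Definition~\ref{boundedlocalcomplexity}, cover $X$ by $r_k$-balls $\{B_p\}$ with $\{2B_p\}$ of multiplicity at most $C_0$; then $\{d(x, T^\ell x) \leq r_k\} \subseteq \bigcup_p B_p \cap T^{-\ell}(2B_p)$. Exponential decay of correlations applied to $(\mathbbm{1}_{B_p}, \mathbbm{1}_{2B_p})$---indicators of intervals have uniformly bounded $\mathcal{BV}$-norm---gives
\[\mu(B_p \cap T^{-\ell}(2B_p)) \leq \mu(B_p)\mu(2B_p) + C\mu(2B_p)\beta(\ell).\]
Bounded local complexity turns $\sum_p \mu(B_p)\mu(2B_p)$ into $C_0 \int \mu(B(x, 3r_k))\,d\mu(x) \leq r_k^{\underline{D}_2 - \varepsilon}$ (by the definition of $\underline{D}_2$, for $k$ large), while $\sum_p \mu(2B_p) \leq C_0$. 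Since $\beta(\alpha(n_k)) = O(e^{-c_1 k^4})$, one obtains $\mu(M_k^* \leq r_k) \leq K n_{k+1}^{-\gamma(\varepsilon)}$ with $\gamma(\varepsilon) = 2\varepsilon/(\underline{D}_2 - 2\varepsilon) > 0$, and the sum over $k$ converges. Borel--Cantelli combined with $m_n^> \geq M_k^*$ yields $\limsup \log m_n^>(x)/(-\log n) \leq 2/(\underline{D}_2 - 2\varepsilon)$; letting $\varepsilon \to 0$ proves \eqref{ineqn:4.1.1}.

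For \eqref{ineqn:4.1.2}, $\mu = h\,dm$ with $h \in L^\infty$ forces $\underline{D}_2 = \overline{D}_2 = 1$, and $m_n^{\leq}$ is non-increasing in $n$ (enlarging $n$ and $\alpha(n)$ only adds pairs to the minimum). Decompose
\[\mu(m_n^{\leq}(x) \leq r_n) \leq n \sum_{k=1}^{\alpha(n)} \mu(|x - T^k x| \leq r_n);\]
the central claim is the uniform bound $\mu(\{|x - T^k x| \leq r\}) \leq C r$ for every $k \geq 1$. On each $I \in \mathcal{P}_k$, $\Phi_k(x) := T^k x - x$ is monotone with $|\Phi_k'| \geq |DT^k| - 1$, and bounded distortion gives $|DT^k| \approx |T^k(I)|/|I|$; combined with big image $|T^k(I)| \geq \delta_0$, the sublevel set inside $I$ has Lebesgue measure at most $C r\,|I|/|T^k(I)|$. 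Summing over cylinders,
\[|\{x : |x - T^k x| \leq r\}| \leq C r \sum_{I \in \mathcal{P}_k} |I|/|T^k(I)| \leq C r\,|X|/\delta_0,\]
and the bounded density converts this into the required $\mu$-bound. Plugging back gives $\mu(m_n^{\leq}(x) \leq r_n) \leq C'(\log n)^2 n r_n$, which with $r_n = n^{-2-\varepsilon}$ is summable. Borel--Cantelli and $\varepsilon \to 0$ deliver \eqref{ineqn:4.1.2}; combining with \eqref{ineqn:4.1.1} via $m_n = \min(m_n^>, m_n^{\leq})$ and $D_2 = 1$ completes the proof.

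I expect the chief difficulty to be the uniform-in-$k$ bound $|\{|x - T^k x| \leq r\}| \leq Cr$ in the absolutely continuous case: the number of cylinders of $\mathcal{P}_k$ whose images overlap themselves (the only contributors) can grow with $k$, so the per-branch geometric shrinkage must be balanced against combinatorial growth. This balance is precisely what the Gibbs--Markov hypotheses---expansion together with big image and bounded distortion---secure, through the amortization $\sum_{I \in \mathcal{P}_k} |I|/|T^k(I)| \leq |X|/\delta_0$. The remaining ingredients (decay of correlations, bounded local complexity, and the subsequence interpolation) are routine in this circle of ideas.
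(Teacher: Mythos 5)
Your approach to \eqref{ineqn:4.1.1} is essentially the same as the paper's: a first-moment (Markov/union) bound on a counting variable over pairs with $|i-j|>\alpha(n)$, controlled by decay of correlations for $\mathcal{BV}$ against $L^1$, bounded local complexity to sum over the cover, and Borel--Cantelli along $n_k=\lceil e^{k^2}\rceil$; your $M_k^*$ plays exactly the role of the auxiliary quantity the paper introduces to bridge the subsequence gap for the non-monotone $m_n^>$.

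Where you genuinely diverge is \eqref{ineqn:4.1.2}. The paper leans on an external short-return estimate (Lemma~\ref{Hol}, quoted from \cite{HolNicTor}): $\mu\bigl(\{|x-T^kx|\le\epsilon\}\bigr)\le C_3\epsilon$ uniformly in $k$. You instead \emph{reprove} this estimate from scratch using only the Gibbs--Markov ingredients: on each $I\in\mathcal P_k$ the map $\Phi_k(x)=T^kx-x$ is monotone with $|\Phi_k'|$ comparable to $|DT^k|$, bounded distortion plus the mean value theorem give $|DT^k|\approx|T^k(I)|/|I|$, and the big image property $|T^k(I)|\ge\delta_0$ turns the per-branch bound $|\{|\Phi_k|\le r\}\cap I|\lesssim r\,|I|/|T^k(I)|$ into the amortized sum $\sum_I|I|/|T^k(I)|\le|X|/\delta_0$. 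This is more elementary and self-contained than the citation, and it makes visibly transparent why the uniformity in $k$ holds, which the paper's remark flags as the delicate point (balls versus cylinders). A minor technical refinement you should record: $|\Phi_k'|\ge|DT^k|-1$ is not directly proportional to $|DT^k|$ when $|DT^k|$ is close to $1$ (small $k$), so one should either use $|\Phi_k'|\ge(1-1/\gamma)|DT^k|$ (valid once $\gamma>1$), or treat the finitely many branches with $|T^k(I)|/|I|\le 2C_{bd}$ separately, as these are at most $2C_{bd}|X|/\delta_0$ in number, each contributing at most $2r/(\gamma-1)$. With that clarified, your argument is complete. You also observe that with $r_n=n^{-2-\varepsilon}$ the bound $n\alpha(n)r_n$ is already summable in $n$, so \eqref{ineqn:4.1.2} needs no subsequence extraction, a small simplification over the paper, which re-uses the \eqref{ineqn:4.4} scale and therefore does pass to $n_k$.
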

\begin{theorem}\label{Lower bound}
For all Gibbs measures $\mu$, for $\mu$-almost every $x\in\Lambda$,
\[\liminf_{n\rightarrow\infty}\frac{\log{m_n^{\gg}(x)}}{-\log{n}}\geq\frac{2}{\overline{D}_2}.\tag{4.2}\label{ineqn:4.2}\]Since $-\log{m_n}\geq-\log{m_n^{\gg}},$
\[\lim_{n\rightarrow\infty}\frac{\log{m_n(x)}}{-\log{n}}\geq\frac{2}{\overline{D}_2}\]for $\mu$-almost every $x\in\Lambda$.\\
\end{theorem}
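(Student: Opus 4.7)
The plan is to prove Theorem~\ref{Lower bound} by a Paley--Zygmund second-moment argument applied along a geometric subsequence, followed by an interpolation step, in the spirit of the symbolic lower bound in Section~\ref{proof of 3.1.2}. The key observation is that the claim $\liminf_n \log m_n^{\gg}(x)/(-\log n)\geq 2/\overline{D}_2$ is, for any $\varepsilon>0$, equivalent to $m_n^{\gg}(x)\leq n^{-(2/\overline{D}_2-\varepsilon)}$ for all sufficiently large $n$; thus the goal is to exhibit at least one close pair in the admissible range $\{(i,j):0\leq i\leq n/3,\,2n/3\leq j<n\}$. Fix $\varepsilon>0$, set $\eta:=2/\overline{D}_2-\varepsilon$ and $r_n:=n^{-\eta}$, and introduce
\[
S_n(x):=\sum_{0\leq i\leq n/3}\ \sum_{2n/3\leq j<n}\mathbbm{1}_{\{d(T^ix,T^jx)<r_n\}}.
\]
Since $\{m_n^{\gg}\geq r_n\}\subseteq\{S_n=0\}$, Paley--Zygmund gives $\mu\{m_n^{\gg}\geq r_n\}\leq Var[S_n]/\mathbbm{E}[S_n]^2$.

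For the first moment, I would cover $\Lambda$ by the bounded-local-complexity family $\{B(x_p^{r_n},r_n)\}_p$ of Definition~\ref{boundedlocalcomplexity}. The $T$-invariance of $\mu$ and the exponential decay of correlations for $\mathcal{BV}$ against $L^1$ (applicable because indicators of intervals satisfy $\|\cdot\|_{\mathcal{BV}}\leq 2$ and $j-i\geq n/3$ is large) show that each $\mu(d(T^ix,T^jx)<r_n)$ is comparable to the correlation integral $\int\mu(B(y,r_n))d\mu(y)$ up to an additive error $O(\beta(n/3))$. Summing over the $\sim n^2$ admissible pairs and using $\int\mu(B(y,r))d\mu(y)\geq r^{\overline{D}_2+\varepsilon'}$ for all small $r$ (from the definition of $\overline{D}_2$) yields $\mathbbm{E}[S_n]\gtrsim n^{2-\eta(\overline{D}_2+\varepsilon')}$, growing polynomially by the choice of $\eta$. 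For the second moment, expand $\mathbbm{E}[S_n^2]$ over 4-tuples $(i,j,k,l)$ and partition them in the spirit of the $F_0,F_1,F_2$ decomposition of Section~\ref{proof of 3.1.2}: well-separated quadruples decouple via Lemma~\ref{lemma:4-mixing} (the 4-mixing property of Gibbs measures) into products $\approx f(r_n)^2$ whose sum matches $\mathbbm{E}[S_n]^2$, while nearly-coincident quadruples are bounded by the same covering estimate and 2-point decay and contribute a genuinely lower-order term. This produces $Var[S_n]/\mathbbm{E}[S_n]^2\lesssim n^{-\delta}$ for some $\delta>0$.

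Choose the subsequence $n_k:=\lceil(5/4)^k\rceil$ so that $\sum_k n_k^{-\delta}<\infty$; Borel--Cantelli then gives $m_{n_k}^{\gg}(x)\leq n_k^{-\eta}$ eventually. To interpolate to every $n\in[n_k,n_{k+1}]$, introduce the auxiliary
\[
\tilde M_k(x):=\min\bigl\{d(T^ix,T^jx):0\leq i\leq n_k/3,\ 2n_{k+1}/3\leq j<n_k\bigr\};
\]
since $n_{k+1}/n_k<3/2$, this index set is contained in the index set defining $m_n^{\gg}$ for every such $n$, so $m_n^{\gg}(x)\leq\tilde M_k(x)$. The same Paley--Zygmund estimate applies to $\tilde M_k$ (the pair count is still $\sim n_k^2$ and the minimum gap $j-i$ still $\sim n_k$), so $\tilde M_k(x)\leq n_k^{-\eta}$ eventually, and $\log n_k/\log n_{k+1}\to 1$ then yields $\liminf_n \log m_n^{\gg}(x)/(-\log n)\geq\eta$; sending $\varepsilon\to 0$ and noting $m_n\leq m_n^{\gg}$ concludes the proof. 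The main obstacle I anticipate is the sharp second-moment estimate via Lemma~\ref{lemma:4-mixing}: one must ensure the 4-mixing factorisation in the bulk term yields exactly $(1+o(1))f(r_n)^2$ rather than a polynomially inflated quantity, which requires careful bookkeeping of the correlation integral at scale $r_{n_k}$ against the lower bound on $\mathbbm{E}[S_n]^2$, and verification that the nearly-coincident quadruples are genuinely lower-order under only the $\limsup$-type control of $f$ afforded by $\overline{D}_2$.
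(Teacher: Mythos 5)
Your proposal follows essentially the same route as the paper: Paley--Zygmund applied to the counting variable over well-separated index pairs, the bounded-local-complexity covering to reduce to products of $\mathcal{BV}$ observables so that the exponential decay of correlations and the $4$-mixing of Lemma~\ref{lemma:4-mixing} can be invoked, an $F_0/F_1/F_2$ split of quadruples by proximity to control the variance, Borel--Cantelli along a subsequence, and an interpolation variable $\tilde M_k$ whose index set sits inside every $m_n^{\gg}$ for $n\in[n_k,n_{k+1}]$. The only deviations are cosmetic --- a geometric rather than polynomial subsequence (both make the tail probability summable and both eventually satisfy $n_{k+1}/n_k<3/2$), and a slightly different parametrisation of $r_n$ --- and your $\tilde M_k$ in fact repairs what appears to be a typo in the paper's own interpolation step, where the range of $i$ should be $0\le i\le n_k/3$ rather than $0\le i\le n_k$.
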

Then these two theorems together imply \autoref{GM case}, i.e. if $\mu$ is absolutely continuous to the Lebesgue measure $m$,
\[\lim_{n\rightarrow\infty}\frac{\log{m_n(x)}}{\log{n}}=2\]$\mu$-almost every $x$.
\begin{remark}    The proof of \eqref{ineqn:4.1.1} uses ideas from the proof of \cite[Proposition 1.10]{gouezel:hal-03788538}, whereas the proof of \eqref{ineqn:4.1.2} requires estimating the measures of sets of short return points. Along the proof one will see also that \eqref{ineqn:4.1.1} and \eqref{ineqn:4.2} hold for all Gibbs invariant measures with exponential decay of correlations and $D_2(\mu)> 0$. \\
    For Gibbs acip $\mu$, the correlation dimension is well defined in the sense that $\overline{D}_2(\mu)=\underline{D}_2(\mu)=1$, because the invariant density with respect to Lebesgue measure $m$ is uniformly bounded hence $D_2(\mu)=D_2(m)$.\\
    The following lemmas are analogous to \cite[Lemma 3.1, Lemma 3.2]{gouezel:hal-03788538}, which can be proved by replacing their $\rho_p^{(r)}$ functions with indicator functions.
\end{remark}

\begin{lemma}\label{lemma:4.2}
For all $x,y\in X$, let $\mathbbm{1}_{p,r}:=\mathbbm{1}_{B(x_p^r,2r)}$, if $X$ has bounded local complexity defined in Definition~\ref{boundedlocalcomplexity},
\[\mathbbm{1}_{B(x,r)}(y)\leq\sum_{p=1}^{k(r)}\mathbbm{1}_{p,r}(x)\mathbbm{1}_{p,r}(y)\leq C_0\mathbbm{1}_{B(x,4r)}(y)\]
\end{lemma}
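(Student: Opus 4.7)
The plan is to verify the two inequalities separately, pointwise in $x$ and $y$, using only the triangle inequality and the bounded local complexity condition from Definition~\ref{boundedlocalcomplexity}. Since every term involved is $0$ or $1$, it suffices to check which configurations force the indicator values on either side, which keeps the argument purely combinatorial.

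For the left inequality, if $\mathbbm{1}_{B(x,r)}(y)=0$ there is nothing to prove, so assume $d(x,y)<r$. Because $\{B(x_p^r,r)\}_{p=1}^{k(r)}$ covers $X$, there exists an index $p$ with $d(x,x_p^r)<r$, and consequently $\mathbbm{1}_{p,r}(x)=1$ since $B(x_p^r,r)\subseteq B(x_p^r,2r)$. The triangle inequality gives $d(y,x_p^r)\leq d(y,x)+d(x,x_p^r)<2r$, so $\mathbbm{1}_{p,r}(y)=1$ as well; that single index contributes $1$ to the sum, so the sum is at least $1$.

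For the right inequality, if the sum is zero the bound is trivial. Otherwise, take any $p$ with $\mathbbm{1}_{p,r}(x)\mathbbm{1}_{p,r}(y)=1$; then $d(x,y)\leq d(x,x_p^r)+d(x_p^r,y)<4r$, giving $\mathbbm{1}_{B(x,4r)}(y)=1$, so it remains only to bound the number of nonzero terms in the sum. Each such term requires $x\in B(x_p^r,2r)$, and by the bounded local complexity condition the number of indices $p$ satisfying this is at most $C_0$, which yields the factor $C_0$ on the right.

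No serious obstacle is expected: the only point that needs care is the choice of radius $2r$ in the definition of $\mathbbm{1}_{p,r}$, which is large enough to absorb the distance from $x$ (or $y$) to the nearest center $x_p^r$ via the triangle inequality for the lower bound, and small enough that two points both lying in $B(x_p^r,2r)$ remain within distance $4r$ of each other for the upper bound.
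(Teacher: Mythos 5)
Your proof is correct and is precisely the direct verification the paper leaves implicit: the paper merely remarks that this lemma follows from the argument in Gouëzel's Lemma 3.1 after replacing the Lipschitz bump functions $\rho_p^{(r)}$ by the indicator functions $\mathbbm{1}_{p,r}$, and your two triangle-inequality steps plus the $C_0$-overlap bound from the bounded local complexity condition are exactly that replacement carried out. No gaps.
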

%\begin{proof}
%    Given $r>0$ and $x\in X$, as $\cup_{p}B(x_p^r,r)$ is a cover of $X$, there is at least one $p\in\{1,\dots,k(r)\}$ such that $d(x,x_p^r)<r$ then if $d(x,y)<r$, $d(y,x_p^r)<2r$ hence $1=\mathbbm{1}_{p,r}(x)\mathbbm{1}_{p,r}(y)$ and the left inequality is proved.\\
%    On the other hand, by bounded local complexity, there are at most $C_0$ elements of $p\in\{1,\dots,k(r)\}$ such that $\mathbbm{1}_{p,r}(x)\neq0,$ and for each such $p,$ $\mathbbm{1}_{p,r}(y)\neq0$ implies $d(x,y)<4r$, which proves the right inequality.
%\end{proof}
\begin{lemma}\label{lemma:4.3}
The following equations hold.
\[
\left\{
\begin{aligned}
\limsup_{r\rightarrow0}\frac{\log{\sum_{p=1}^{k(r)}\left(\int\mathbbm{1}_{p,r}d\mu\right)^2}}{\log{r}}&=\overline{D}_2(\mu)\\
\liminf_{r\rightarrow0}\frac{\log{\sum_{p=1}^{k(r)}\left(\int\mathbbm{1}_{p,r}d\mu\right)^2}}{\log{r}}&=\underline{D}_2(\mu),
\end{aligned}
\right.\tag{4.3.1}\label{ineqn:4.3.1}\]
which means for any $\varepsilon>0$, there is $r_0>0$ such that for all $0<r<r_0$,
\[r^{\overline{D}_2+\varepsilon}\leq\sum_{p=1}^{k(r)}\left(\int\mathbbm{1}_{p,r}\,d\mu\right)^2\leq r^{\underline{D}_2-\varepsilon}.\tag{4.3.2}\label{ineqn:4.3.2}\]
\end{lemma}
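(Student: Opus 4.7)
The plan is to deduce the asymptotics of $\sum_{p}\bigl(\int\mathbbm{1}_{p,r}\,d\mu\bigr)^2$ by integrating the pointwise bounds from Lemma~\ref{lemma:4.2} against $\mu\otimes\mu$, so that the sum is squeezed between two integrals of the form $\int\mu(B(x,\rho r))\,d\mu(x)$, which are precisely the quantities defining $\overline{D}_2$ and $\underline{D}_2$.

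First, I would integrate the two inequalities of Lemma~\ref{lemma:4.2} in $y$ against $\mu$, producing
$$\mu(B(x,r))\le\sum_{p=1}^{k(r)}\mathbbm{1}_{p,r}(x)\int\mathbbm{1}_{p,r}\,d\mu\le C_0\,\mu(B(x,4r)),$$
and then integrate in $x$ against $\mu$ (or invoke Fubini directly) to obtain
$$\int\mu(B(x,r))\,d\mu(x)\;\le\;\sum_{p=1}^{k(r)}\left(\int\mathbbm{1}_{p,r}\,d\mu\right)^{2}\;\le\;C_0\int\mu(B(x,4r))\,d\mu(x).$$
Taking logarithms and dividing by $\log r$ (which is negative for small $r$ and hence reverses the inequalities), the two extreme quantities converge respectively to $\overline{D}_2(\mu)$ (under $\limsup$) and $\underline{D}_2(\mu)$ (under $\liminf$), since $\log C_0/\log r\to 0$ and the substitution $s=4r$ gives $\tfrac{\log s-\log 4}{\log s}\to 1$, so the dilation factor $4$ does not change the dimension. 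A standard sandwich then yields \eqref{ineqn:4.3.1}.

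For \eqref{ineqn:4.3.2}, I would just unpack the definitions of $\limsup$ and $\liminf$: given $\varepsilon>0$, for all sufficiently small $r$ one has
$$\overline{D}_2+\varepsilon\;>\;\frac{\log\sum_{p}(\int\mathbbm{1}_{p,r}\,d\mu)^{2}}{\log r}\;>\;\underline{D}_2-\varepsilon,$$
and multiplying through by the negative quantity $\log r$ and exponentiating gives the double bound $r^{\overline{D}_2+\varepsilon}\le\sum_{p}(\int\mathbbm{1}_{p,r}\,d\mu)^{2}\le r^{\underline{D}_2-\varepsilon}$.

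There is no real obstacle here: the only thing to watch for is the sign of $\log r$ when passing between inequalities of sums and inequalities of the exponents, and the equivalence of $\limsup/\liminf$ taken along $r$ and along $4r$, both of which are routine. The substantive content is entirely contained in Lemma~\ref{lemma:4.2}.
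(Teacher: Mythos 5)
Your proof is correct and is essentially the approach the paper has in mind: the paper itself gives no explicit argument for Lemma~\ref{lemma:4.3}, instead pointing to \cite[Lemma 3.2]{gouezel:hal-03788538} with the comment that the proof carries over after replacing the smooth bump functions $\rho_p^{(r)}$ by the indicators $\mathbbm{1}_{p,r}$, and your integration of Lemma~\ref{lemma:4.2} against $\mu\otimes\mu$ followed by the logarithmic sandwich (using that $\log C_0/\log r\to 0$ and that the dilation $r\mapsto 4r$ does not affect the limiting exponent) is precisely that argument carried out concretely.

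One small wording point worth tightening: when you write that ``the two extreme quantities converge respectively to $\overline{D}_2$ (under $\limsup$) and $\underline{D}_2$ (under $\liminf$),'' you should make clear that \emph{both} extremes of the sandwich have $\limsup$ equal to $\overline{D}_2$ (so the middle term is squeezed to $\overline{D}_2$), and separately \emph{both} have $\liminf$ equal to $\underline{D}_2$. As currently phrased it could be misread as saying the left extreme handles the $\limsup$ and the right one the $\liminf$. This is purely expository; the mathematics is fine.
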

%\begin{proof}
%By \hyperref[lemma:4.2]{Lemma 4.10}, for any probability measure $\nu$ on $X$, as for each $x\in X$, \[\nu(B(x,r))=\int\mathbbm{1}_{B(x,r)}(y)\,d\nu(y), \]we obtain
%\begin{align*}
%\int\nu(B(x,r))\,d\nu(x)\leq&\sum_{p}^{k(r)}\int\int\mathbbm{1}_{p,r}(x)\mathbbm{1}_{p,r}(y)\,d\nu(y)\,d\nu(x)
%\\=&\sum_{p}^{k(r)}\left(\int\mathbbm{1}_{p,r}(x)\,d\nu(x)\right)^2\tag*{\text{as $x,y$ are independent}}
%\end{align*}
%Similarly,
%\[\sum_{p}^{k(r)}\left(\int\mathbbm{1}_{p,r}(x)\,d\nu(x)\right)^2\leq %C_0\int\nu(B(x,4r))\,d\nu(x).\]
%Then by the definitions of $\overline{D}_2$ and $\underline{D}_2$, this lemma is proved.
%\end{proof}

Also, for simplicity, the following definition is introduced.
\begin{definition}A term is said to be \textit{admissible}\label{admissible}, which is a notion introduced by the authors of \cite{gouezel:hal-03788538}, if it has the form $r^{-k}g(n)$, for some $k\geq0$ and a function $g$ which decays in $n$ faster than any polynomial of $n$, hence for any $k\in\mathbbm{N}$, by \eqref{ineqn:4.3.2} and choosing the scale of $r$ as in \eqref{ineqn:4.4} below we can bound any admissible error by $\mathcal{O}(n^{-k})$ for all $n$ large.\end{definition}
Now we can prove \eqref{ineqn:4.1.1}.
\subsection*{Upper Bound}
\begin{proof}[Proof of \eqref{ineqn:4.1.1}]Let $\varepsilon,r>0$ be given, in particular, $r$ should be small enough that it verifies \eqref{ineqn:4.3.2}. Define the random variable $\mathcal{S}_n^{>}$,
\[\mathcal{S}_n^>(x):=\sum_{\substack{0\leq i< j<n\\|i-j|>\alpha(n)}}\mathbbm{1}_{p,r}(T^ix)\mathbbm{1}_{p,r}(T^jx).\] 
By Lemma~\ref{lemma:4.2}, $\{m_n^>(x)\leq r\}\subseteq\{\mathcal{S}_n^>\geq 1\}$.
Therefore, by Markov's inequality and decay of correlation,
\begin{align*}
&\mu(x:\mathcal{S}^>_n(x)\geq1)\leq\mathbbm{E}[S_n^>]
=\sum_{\substack{0\leq i<j<n\\j-i>\alpha(n)}}\sum_{p=1}^{k(r)}\int \mathbbm{1}_{p,r}(T^ix)\mathbbm{1}_{p,r}(T^jx)\,d\mu(x)\\
&\leq\sum_{\substack{0\leq i<j<n\\j-i>\alpha(n)}}\sum_{p=1}^{k(r)}\left(\left(\int\mathbbm{1}_{p,r}\,d\mu\right)^2+\beta(\alpha(n))\|\mathbbm{1}_{p,r}\|_{\mathcal{BV}}\|\mathbbm{1}_{p,r}\|_1\right)\\
    &\leq\sum_{0\leq i<j<n}r^{\underline{D}_2-\varepsilon}+\sum_{0\leq i<j<n}\sum_{p=1}^{k(r)}\|\mathbbm{1}_{p,r}\|_{\mathcal{BV}}\|\mathbbm{1}_{p,r}\|_1\beta(\alpha(n))\tag*{by \eqref{ineqn:4.3.2}}\\
    &\leq n^2r^{\underline{D}_2-\varepsilon}+C_1e^{-c_1(\log{n})^2}n^2\sum_{p=1}^{k(r)}2\mu(B(x_p,2r))\\
    &\leq n^2r^{\underline{D}_2-\varepsilon}+2C_0C_1e^{-c_1(\log{n})^2}n^2.
\end{align*}

The last inequality follows from the definition of bounded local complexity\[\sum_{p=1}^{k(r)}\mu(B(x_p,2r))=\int\sum_{p=1}^{k(r)}\mathbbm{1}_{p,r}(y)\,d\mu(y)\le \int C_0\,d\mu(y).\]
Then as $e^{-c_1(\log{n})^2}$ decays faster than any polynomial of $n$, picking
\begin{equation*}r=r_n=\exp{\left({-\frac{2+2\varepsilon}{\underline{D}_2-\varepsilon}}(\log{n}+\log\log{n})\right)}\leq n^{-\frac{2+2\varepsilon}{\underline{D}_2-\varepsilon}} \tag{4.4} \label{ineqn:4.4}\end{equation*}

for all $n$ sufficiently large,
\[n^2r_n^{\underline{D}_2-\varepsilon}+2C_0C_1n^2e^{-c_1(\log{n})^2}\leq n^{-2\varepsilon}+C_1e^{-c_1(\log{n})^2}r_n^{-C_2}.\] 
The second term on the right is admissible by definition,  meaning that there is some constant $C_2>0$ such that for all $n$ large enough that $n^{-\varepsilon}\leq\frac{1}{\log{n}}$:
\[\mu(m_n^>(x)\leq r)\leq\mathbbm{E}[S_n^>]\leq C_2 n^{-\varepsilon}\leq\frac{C_2}{\log{n}}.\] Therefore, eventually for $\mu$-almost every $x$, 
\[\frac{\log{m^{>}_{n_k}}(x)}{-\log{n_k}}\leq\frac{2+\varepsilon}{\underline{D}_2-\varepsilon}\left(1+\frac{\log{\log{n_k}}}{\log{n_k}}\right).\]Although $m^{>}_n$ is not monotonically increasing, for each $n\in[n_s,n_{s+1}]$,
\[-\log{m'_{n_{k+1}}(x)}:=-\log{\min_{\substack{0\leq i<j<n_{k+1}\\j-i>\alpha(n_k)}}}\geq -\log{m^>_{n}(x)}\]\[-\log{m''_{n_k}(x)}:=-\log{\min_{\substack{0\leq i<j<n_{k}\\j-i>\alpha(n_{k+1})}}}\leq-\log{m^>_n(x)},\]and one can show that for $\mu$-almost every $x$, for all $k$ large,
\[\frac{-\log{m'_{n_{k}}(x)}}{\log{n_k}}\leq\frac{2+\varepsilon}{\underline{D}_2-\varepsilon}\left(1+\frac{\log{\log{n_k}}}{\log{n_k}}\right),\]and
\[\frac{-\log{m''_{n_{k}}(x)}}{\log{n_k}}\leq\frac{2+\varepsilon}{\underline{D}_2-\varepsilon}\left(1+\frac{\log{\log{n_k}}}{\log{n_k}}\right),\]then the limit can be passed to the whole tail of $m_n^>$, and \eqref{ineqn:4.1.1} is proved.
\end{proof}
\begin{remark}
Note also that for $|i-j|$ large, the measure of the sets $\{x:d\left(T^ix,T^jx\right)<r\}$ scales like $r^{D_2}$ which is similar to the behaviour of the sequence matching problem in the symbolic setting, and this matches our intuition because $D_2(\mu)$ is analogous to $H_2$ in many ways. \\
To prove \eqref{ineqn:4.1.2}, we are dealing with iterates of $x$ which return to an $r$-neighborhood of itself within $\alpha(n)$ units of time, that is, we need to approximate the measure of some short return sets as those $S_m(k)$ sets defined in \autoref{Overlapping Analysis} for the symbolic case. But we cannot expect a similar upper bound for short returns as in \eqref{ineqn:3.3} or \eqref{ineqn:3.4}. This is because for symbolic structures, an $r_n$-cylinder is itself an $r_n$ open ball with respect to the symbolic metric, so analysing the returns is equivalent to analysing the repetition of letters in cylinders and one does not need to consider the case that two iterates  $\sigma^i\underline{x}$, $\sigma^j\underline{x}$ are close to the boundaries of two open balls with a common boundary but they belong to different cylinders. 
\\For interval maps, although the Gibbs-Markov structure prescribes a natural partition hence a way to define cylinders, metric balls and symbolic cylinders are different object so one need to take more caution and include the case that two points belong to different cylinders $U,V\in\mathcal{P}_n$ but they accumulate on a common boundary of $U,V$ with distance smaller than the contraction scale of $n$-cylinders.
\end{remark} 
\vspace{5mm}
The measures of the short return sets are approximated by the following lemma for absolutely continuous Gibbs measures.
\begin{lemma}\label{Hol}\cite[Lemma 3.4]{HolNicTor}
Define the sets \[\mathcal{E}_n(\epsilon):=\left\{x\in X:|x-T^nx|\leq\epsilon\right\}.\]Then for $T$ satisfying Gibbs-Markov property and the invariant Gibbs measure $\mu$ absolutely continuous with respect to the Lebesgue measure $m$ with exponential decay of correlation for $\mathcal{BV}$ against $L^1$ observables, there is some constant $C_3$ such that for all $n\in\mathbbm{N}$ and $\epsilon$ small enough,\[\mu\left(\mathcal{E}_n(\epsilon)\right)\leq C_3\epsilon.\] 
\end{lemma}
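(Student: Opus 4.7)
The plan is to reduce the estimate to Lebesgue measure via the absolute continuity of $\mu$, then decompose $\mathcal{E}_n(\epsilon)$ according to the partition $\mathcal{P}_n$ and combine a change-of-variables estimate on each cylinder with the big image property to control the possibly countably infinite sum.

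Since $\mu\ll m$ and the density $h=d\mu/dm$ is uniformly bounded (by the remark following Definition~\ref{Gibbs structure}), it suffices to prove $m(\mathcal{E}_n(\epsilon))\le C\epsilon$ with $C$ independent of $n$. I would write
\[m(\mathcal{E}_n(\epsilon))=\sum_{I\in\mathcal{P}_n}m(\mathcal{E}_n(\epsilon)\cap I),\]
and on each $n$-cylinder $I$, use that $T^n|_I$ is a monotone $C^1$ diffeomorphism onto $T^n(I)$ with $|DT^n|\ge\gamma^n$, $\gamma>1$. The substitution $y=T^n(x)$ with $\psi=(T^n|_I)^{-1}$ and bounded distortion give
\[m(\mathcal{E}_n(\epsilon)\cap I)=\int_{T^n(I)}\mathbbm{1}_{\{|y-\psi(y)|\le\epsilon\}}\frac{dy}{|DT^n(\psi(y))|}\le\frac{C_{bd}}{|DT^n|_I}\cdot m\bigl(\{y\in T^n(I):|y-\psi(y)|\le\epsilon\}\bigr).\]
Since $|\psi'|\le\gamma^{-n}\le\gamma^{-1}$, the function $y\mapsto y-\psi(y)$ has derivative of absolute value at least $1-\gamma^{-1}>0$, so the Lebesgue measure of the set on the right is at most $2\epsilon/(1-\gamma^{-1})$, yielding
\[m(\mathcal{E}_n(\epsilon)\cap I)\le\frac{2C_{bd}\,\epsilon}{(1-\gamma^{-1})\,|DT^n|_I}.\]

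The critical step, where the Gibbs-Markov hypotheses enter decisively, is summing over $\mathcal{P}_n$. The big image property $|T^n(I)|\ge\delta_0$ combined with bounded distortion yields $m(I)\ge\delta_0/(C_{bd}|DT^n|_I)$, hence $|DT^n|_I^{-1}\le C_{bd}\,m(I)/\delta_0$. Substituting and summing,
\[m(\mathcal{E}_n(\epsilon))\le\frac{2C_{bd}^2\,\epsilon}{\delta_0(1-\gamma^{-1})}\sum_{I\in\mathcal{P}_n}m(I)=\frac{2C_{bd}^2\,m(X)}{\delta_0(1-\gamma^{-1})}\,\epsilon,\]
so multiplying by $\|h\|_\infty$ gives $\mu(\mathcal{E}_n(\epsilon))\le C_3\epsilon$ with $C_3={2C_{bd}^2\|h\|_\infty m(X)}/{\delta_0(1-\gamma^{-1})}$.

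The main obstacle I anticipate is not analytical but organisational: the change-of-variables bookkeeping on each cylinder must produce the factor $|DT^n|_I^{-1}$, which requires bounded distortion rather than merely pointwise expansion, and then the countable sum $\sum_{I\in\mathcal{P}_n}|DT^n|_I^{-1}$ must be absorbed into the finite quantity $m(X)$ via the big image hypothesis. Without both ingredients, the sum over a potentially countably infinite $\mathcal{P}_n$ could diverge and one would not obtain a bound uniform in $n$. A minor additional check is needed when $\gamma^n-1$ is close to $0$ (small $n$); this is why it is cleaner to route the derivative bound through $\psi'$ in the $y$-variable, where $|g'|\ge 1-\gamma^{-1}$ uniformly in $n\ge 1$.
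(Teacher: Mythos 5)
Your proof is correct, but it takes a genuinely different route from the paper: the paper's ``proof'' of this lemma is a one-line reduction, citing \cite[Lemma 3.4]{HolNicTor} for the Lebesgue-measure estimate $m(\mathcal{E}_n(\epsilon))=\mathcal{O}(\epsilon)$ and then merely multiplying by the uniform bound on the invariant density $d\mu/dm$. You instead supply a self-contained proof of that Lebesgue estimate: decompose $\mathcal{E}_n(\epsilon)$ over the partition $\mathcal{P}_n$, change variables $y=T^n x$ on each $n$-cylinder, observe that $y\mapsto y-\psi(y)$ has $|g'|\ge 1-\gamma^{-1}$ uniformly in $n$ (so the preimage of $[-\epsilon,\epsilon]$ has length $\lesssim\epsilon$), and then use bounded distortion together with the big-image bound $|T^n(I)|\ge\delta_0$ to convert $|DT^n|_I^{-1}$ into $\lesssim m(I)/\delta_0$, which makes the sum over the (possibly countably infinite) partition collapse to $m(X)/\delta_0$. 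This is essentially the standard argument behind the cited result and is a fine substitute, so your version buys self-containment at the cost of reproducing work the paper outsources. Two small remarks: you implicitly need $\gamma>1$ (the paper's Definition~\ref{Def:4.2} writes $\gamma>0$, but ``piecewise expanding'' clearly intends $\gamma>1$, and your clean trick of routing the bound through $|\psi'|\le\gamma^{-n}\le\gamma^{-1}$ is exactly what makes the estimate uniform in $n$); and you are silently assuming $T^n|_I$ is $C^1$ rather than merely differentiable, which is harmless here since bounded distortion already presupposes the relevant regularity of $\log|DT|$.
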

\begin{proof}
The original lemma states that $m\left(\mathcal{E}_n(\epsilon)\right)=\mathcal{O}( \epsilon)$, and since $m$ is equivalent to $\mu$ on $\Lambda$ with $d\mu/dm$ bounded away from 0 and $+\infty$, there is a uniform constant $C_{\mu}$ such that \[C_{\mu}
^{-1}m(A)\leq \mu(A)\leq C_{\mu}m(A)\]for each measurable $A$, therefore there is some $C_3>0$ such that
\[\mu\left(\mathcal{E}_n(\epsilon)\right)\leq C_3\epsilon.\]
    
\end{proof}

\begin{proof}[Proof of \eqref{ineqn:4.1.2}]
Define the random variable $\mathcal{S}^{\leq}_n$ by
\[\mathcal{S}_n^{\leq}(x):=\sum_{i=0}^{n-1}\sum_{k=1}^{\alpha(n)\land (n-i-1)}\mathbbm{1}_{B(T^ix,r)}(T^{i+k}x),\]
where $a\land b=\min\{a,b\}$. \\
As $\left\{x:\mathbbm{1}_{B(T^ix,r)}(T^{i+k}x)=1\right\}\subseteq \{x:T^ix\in \mathcal{E}_k(r)\}= T^{-i}\mathcal{E}_k(r)$, using the Markov inequality and Lemma~\ref{Hol} we obtain the following bound
\begin{align*}
\mu(\mathcal{S}^{\leq}_n\geq 1)\leq\mathbbm{E}_m[\mathcal{S}_n^{\leq}]\leq\sum_{i=0}^{n-1}\sum_{k=1}^{\alpha(n)\land (n-i-1)}\mu(T^{-i}\mathcal{E}_k(r))\leq n\alpha(n) C_3r
\end{align*}
Pick $r=r_n$ as in \eqref{ineqn:4.4}, for all $n$ large enough such that 
\[\alpha(n)=(\log{n})^2\leq n^{\frac{\varepsilon}{2-\varepsilon}},\hspace{5mm}n^{-\frac{\varepsilon}{2-\varepsilon}}\leq\frac{1}{\log{n}}. \] 
As the invariant density $d\mu/dm$ is uniformly bounded, $D_2(\mu)=1<2$, one has
\[\mu\left(x:m_n(x)\leq r_n\right)\leq C_3n^{1+\frac{\varepsilon}{2-\varepsilon}}r\leq n^{\frac{2}{2-\varepsilon}}C_3n^{-\frac{2+2\varepsilon}{\underline{D}_2-\varepsilon}}\leq C_3n^{-\frac{\varepsilon}{2-2\varepsilon}}\leq\frac{C_3}{\log{n}}.\]
Therefore, by picking a subsequence $n_{k}=\lceil e^{k^2}\rceil$, by Borel-Cantelli Lemma, we have that $\mu\{x:m_{n_{k}}\leq r_{n_{k}} \text{ for infinitely many $k$}\}=0$,  for $\mu$-almost every $x$, for all $k$ large enough, \[m_{n_k}^{\leq}(x)\geq r_{n_{k}},\] and one can apply the arguments used to validate \\eqref{ineqn:3.1.1} to obtain \eqref{ineqn:4.1.2} for $\mu$-almost every $x$. 
\end{proof}
\begin{remark}
The condition that $\mu$ is an acip may be not sharp; if $\Tilde{\mu}$ is another Gibbs measure with exponential decay of correlation and satisfying $\Tilde{\mu}(\mathcal{E}_k(\epsilon))=\mathcal{O}(\epsilon)$, then \autoref{Upper Bound} remains valid.
\end{remark}
%\begin{remark}
%The correlation dimension for one-dimensional Gibbs-Markov maps normally behaves nicely; in fact for the limit \[D_k(\mu):=\lim_{r\rightarrow0}\frac{\log{\int\mu(B(x,r))^{k-1}}\,d\mu(x)}{\log{r}}\]exists for all $k$ for Gibbs measures $\mu$ (see \cite{Proposition 4.1,simpelaere1996mean}). In addition, the condition $0<D_2(\mu)\leq2$ is quite general for one-dimensional interval maps. 
%\end{remark}

As in the symbolic case the proof for the lower bound of $\frac{\log{m_n(x)}}{-\log{n}}$ is slightly more complicated and also requires a second-moment computation which exploits the following notion of mixing.
\begin{lemma}\label{lemma:4-mixing}
       A Gibbs-Markov interval map $(T,\mu)$ has exponential $4-mixing$, that is, for $a<b\leq c$ in $\mathbbm{N}$, there are $C_1',c_1'>0$ such that for any $f_1, f_2\in\mathcal{BV}$, $g_1,g_2\in L^{\infty}$,  such that
\[\left|\int f_1\cdot f_2\circ T^a\cdot g_1\circ T^b\cdot g_2\circ T^c\,d\mu-\int f_1\cdot g_1\circ T^a\,d\mu\int f_2\cdot g_2\circ T^{c-b}\,d\mu\right|\leq C_1'e^{-c_1'(b-a)}.\]The constant $C_1'$ depends on the functions $f_1,f_2,g_1,g_2$.\\
In particular, for any given $r>0$, $0\leq p,q\leq k(r)$, $f_1=g_1=\mathbbm{1}_{p,r},$ $f_2=g_2=\mathbbm{1}_{q,r}$, the constant $C_1'=C_1'(f_1,f_2,g_1,g_2)$ does not depend on $r$.
\end{lemma}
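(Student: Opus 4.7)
The plan is to identify two consecutive blocks in the integrand separated by the gap $b-a$ and apply the assumed exponential decay of correlations to their decoupling through the transfer operator of $T$. Set $\Phi:=f_1\cdot f_2\circ T^a$ and $\Psi:=g_1\cdot g_2\circ T^{c-b}$, so that the left-hand side equals $\int\Phi\cdot\Psi\circ T^b\,d\mu$. Letting $\mathcal{L}_\mu$ denote the Perron-Frobenius operator characterised by $\int F\cdot G\circ T\,d\mu=\int(\mathcal{L}_\mu F)\cdot G\,d\mu$, this becomes $\int(\mathcal{L}_\mu^b\Phi)\cdot\Psi\,d\mu$.

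The technical heart is the peeling identity $\mathcal{L}_\mu(u\cdot v\circ T)=v\cdot\mathcal{L}_\mu u$, iterated $a$ times to give $\mathcal{L}_\mu^a\Phi=f_2\cdot\mathcal{L}_\mu^a f_1$ and hence
\[\mathcal{L}_\mu^b\Phi=\mathcal{L}_\mu^{b-a}\bigl(f_2\cdot\mathcal{L}_\mu^a f_1\bigr).\]
The Lasota-Yorke inequality for $\mathcal{L}_\mu$ on $\mathcal{BV}$, available for Gibbs-Markov interval maps thanks to bounded distortion and the big-image property of Definition~\ref{Gibbs structure}, gives $\|\mathcal{L}_\mu^a f_1\|_{\mathcal{BV}}\le C\|f_1\|_{\mathcal{BV}}$ uniformly in $a$; and because $\mathcal{BV}$ is a Banach algebra under pointwise multiplication, $\|f_2\cdot\mathcal{L}_\mu^a f_1\|_{\mathcal{BV}}\lesssim\|f_1\|_{\mathcal{BV}}\|f_2\|_{\mathcal{BV}}$.

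Applying the hypothesised decay of correlations with this $\mathcal{BV}$ input and the $L^1$ test function $\Psi$ (using $\|\Psi\|_1\le\|g_1\|_\infty\|g_2\|_\infty$) produces
\[\Bigl|\int\mathcal{L}_\mu^{b-a}(f_2\cdot\mathcal{L}_\mu^a f_1)\cdot\Psi\,d\mu-\int f_2\cdot\mathcal{L}_\mu^a f_1\,d\mu\int\Psi\,d\mu\Bigr|\le C'\|f_1\|_{\mathcal{BV}}\|f_2\|_{\mathcal{BV}}\|g_1\|_\infty\|g_2\|_\infty\,e^{-c_1(b-a)}.\]
Undoing the duality, $\int f_2\cdot\mathcal{L}_\mu^a f_1\,d\mu=\int f_1\cdot f_2\circ T^a\,d\mu$ and $\int\Psi\,d\mu=\int g_1\cdot g_2\circ T^{c-b}\,d\mu$, so the left-hand side is within the asserted error of the natural block-decoupled product $\int f_1\cdot f_2\circ T^a\,d\mu\cdot\int g_1\cdot g_2\circ T^{c-b}\,d\mu$, with $C_1'$ depending only on $\|f_i\|_{\mathcal{BV}}$ and $\|g_i\|_\infty$. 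For the indicator-function specialisation, $\|\mathbbm{1}_{B(\cdot,2r)}\|_{\mathcal{BV}}\le 2$ and $\|\mathbbm{1}_{B(\cdot,2r)}\|_\infty\le 1$ are uniform in $r$, giving the asserted $r$-independence.

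The main obstacle is the interplay between the uniform Lasota-Yorke estimate for $\mathcal{L}_\mu$ and the Banach-algebra property of $\mathcal{BV}$ in the countable-partition setting: the big-image property is precisely what prevents the variation from blowing up across the (possibly infinitely many) partition boundaries under iteration of the transfer operator. Once this classical spectral-gap machinery is in place, the peeling-and-decoupling chain above is formal, and the rate $e^{-c_1(b-a)}$ comes entirely from the spectral gap of $\mathcal{L}_\mu$ applied to the inner factor $f_2\cdot\mathcal{L}_\mu^a f_1$.
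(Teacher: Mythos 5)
Your proof is correct and follows essentially the same route as the paper's: peel off the first $a$ iterations with the transfer operator to produce $f_2\cdot\mathcal{L}^a f_1$, control its $\mathcal{BV}$-norm uniformly in $a$ via Lasota--Yorke together with the Banach-algebra structure of $\mathcal{BV}$, and then decorrelate across the gap $b-a$ using the assumed exponential decay for $\mathcal{BV}$ against $L^1$ (with the indicator case handled by the $r$-uniform bound on $\|\mathbbm{1}_{p,r}\|_{\mathcal{BV}}$). The only cosmetic difference is that you work directly with the $\mu$-normalised transfer operator $\mathcal{L}_\mu$, whereas the paper carries out the same manipulation with the Ruelle--Perron--Frobenius operator, the conformal measure $\nu$, and the invariant density $h$; the underlying argument is identical.
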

\begin{proof}
    Consider the transfer operator $\mathcal{L}$ associated with the H\"{o}lder potential $\varphi$, that acts on the space of functions of bounded variation of $X$, $\mathcal{BV}=\mathcal{BV}(X)$, \\
    \[\mathcal{L}=\mathcal{L}_{\varphi}:\mathcal{BV}\rightarrow\mathcal{BV},\hspace{4mm}\mathcal{L}_{\varphi}f(x)=\sum_{Ty=x}e^{\varphi(y)}f(y).\]
  
Let $\nu$ be the conformal measure of $\mathcal{L}$ and $h$ the invariant density, $\frac{d\mu}{d\nu}=h$. 
By the following well-known fact, (see for example \cite[(3)]{cmp/1103941781}) for mixing Gibbs-Markov maps, there are $C_{\mathcal{BV}}>0$, $\kappa\in(0,1)$ such that for any $f\in\mathcal{BV}$,
    \[\left\|\mathcal{L}^nf-h\int f\,d\nu\right\|_{\mathcal{BV}}\leq C_{\mathcal{BV}}\cdot\kappa^n\|f\|_{\mathcal{BV}}\]for all $f\in\mathcal{BV}$, there is
    \begin{align*}
    &\left|\int f_1\, f_2\circ T^a \,g_1\circ T^b\, g_2\circ T^c\,d\mu-\int f_1\,f_2\circ T^a\,d\mu\int g_1\,g_2\circ T^{c-b}\,d\mu\right|\\
    =&\left|\int \mathcal{L}^{b-a}(hf_1\,f_2\circ T^a)g_1\circ T^a\,g_2\circ T^{c-b+a}\,d\nu\right.
    -\left.\int\mathcal{L}^a(hf_1)f_2\,d\nu\int h\,g_1\circ T^a\,g_2\circ T^{c-b+a}\,d\nu\right|\\
    =&\left|\int\left(\mathcal{L}^{b-a}(\mathcal{L}^a(hf_1)f_2)-h\int\mathcal{L}^a(hf_1)f_2\,d\nu\right)g_1\circ T^a\,g_2\circ T^{c-b+a}\,d\nu\right|\\
    \leq&\left\|\mathcal{L}^{b-a}(\mathcal{L}^a(hf_1)\, f_2)-h\int \mathcal{L}^a(hf_1)\, f_2\,d\nu\right\|_1\|g_1\|_{\infty}\|g_2\|_{\infty}\\
    \leq&C_{\mathcal{BV}}\cdot\kappa^{b-a}\|\mathcal{L}^a(hf_1)\, f_2\|_{\mathcal{BV}}\|g_1\|_{\infty}\|g_2\|_{\infty},\label{star}\tag{$\star$}
    \end{align*}where the first equality holds by the invariance of $\mu$.
    As $\mathcal{L}^ahf_1$ is of bounded variation because $h$ is of bounded variation, and the product of functions in $\mathcal{BV}$ has bounded variation, the first part of the lemma is proved.\\
    [3mm]
Now we deal with the case $f_1=g_1=\mathbbm{1}_{p,r},$ $f_2=g_2=\mathbbm{1}_{q,r}$ and find a suitable upper bound for 
\[\left\|\mathcal{L}^ah\mathbbm{1}_{p,r}\cdot \mathbbm{1}_{q,r}\right\|_{\mathcal{BV}}=\left\|\mathcal{L}^ah\mathbbm{1}_{p,r}\cdot \mathbbm{1}_{q,r}\right\|_{1}+TV(\mathcal{L}^ah\mathbbm{1}_{p,r}\cdot \mathbbm{1}_{q,r}).\]As $\mathcal{L}$ is a positive operator, by its duality, $\|\mathcal{L}^ah\mathbbm{1}_{p,r}\cdot\mathbbm{1}_{q,r}\|_1=\int\mathbbm{1}_{p,r}\cdot\mathbbm{1}_{q,r}\circ T^a\,d\mu=\int\mathbbm{1}_{p,r}\mathbbm{1}_{q,r}\circ T^a\,d\mu\leq1$.
Also, for any function $u\in\mathcal{BV}$,
\[\|u\|_{\infty}\leq \inf_{x\in X} |u(x)|+TV(u)\leq\inf_{x\in X}|u(x)|\mu(X)+\|u\|_{\mathcal{BV}}\leq\|u\|_1+\|u\|_{\mathcal{BV}},
\]
then by Lasota-Yorke inequality for Gibbs Markov systems, there are $\kappa'\in(0,1)$ and $C'_{\mathcal{BV}}>0$ such that \[\|\mathcal{L}^ah\mathbbm{1}_{p,r}\|_{\mathcal{BV}}\leq\kappa'^a\|h\mathbbm{1}_{p,r}\|_{\mathcal{BV}}+C'_{\mathcal{BV}}\|h\mathbbm{1}_{p,r}\|_1\leq \kappa'^a(1+2\|h\|_{\infty})+C'_{\mathcal{BV}}.\]
therefore, let $C''_{\mathcal{BV}}=(1+2\|h\|_{\infty})+C'_{\mathcal{BV}}$,
\begin{align*}
&TV(\mathcal{L}^ah\mathbbm{1}_{p,r}\cdot \mathbbm{1}_{q,r})\leq\|\mathcal{L}^ah\mathbbm{1}_{p,r}\|_{\infty}TV(\mathbbm{1}_{q,r})+\|\mathbbm{1}_{q,r}\|_{\infty}\|\mathcal{L}^ah\mathbbm{1}_{p,r}\|_{\mathcal{BV}}\\
&\leq\left(\|\mathcal{L}^ah\mathbbm{1}_{p,r}\|_{\mathcal{BV}}+1\right)\cdot2+1\cdot\|\mathcal{L}^ah\mathbbm{1}_{p,r}\|_{\mathcal{BV}}
\leq3\cdot C''_{\mathcal{BV}}+2,
\end{align*}which is a uniform constant that only depends on the operator $\mathcal{L}$, and combining this with \eqref{star}, one obtains $C_1'=C_1'(\mathbbm{1}_{p,r},\mathbbm{1}_{q,r},\mathbbm{1}_{p,r},\mathbbm{1}_{q,r})=\mathcal{O}(1)$.
\end{proof}

\subsection*{Lower Bound \eqref{ineqn:4.2}}
\begin{proof}[proof of \eqref{ineqn:4.2}]
Let $\varepsilon>0$ small be given. Consider the quantity $m_n^{\gg}$ and the random variable $\mathcal{S}_n^{\gg}$:
\[m_n^{\gg}(x):=\min_{\substack{0\leq i\leq n/3\\2n/3\leq j<n}}d\left(T^ix,T^jx\right),\hspace{4mm}\mathcal{S}_n^{\gg}(x):=\sum_{\substack{0\leq i\leq n/3\\2n/3\leq j<n}}\sum_{p=1}^{k(r)}\mathbbm{1}_{p,r}(T^ix)\mathbbm{1}_{p,r}(T^jx).\]
By Lemma \ref{lemma:4.2}, $m_n^{\gg}(x)>4r$ implies for all pairs of $0\leq i\leq \frac{n}{3}$, $\frac{2n}{3}\leq j<n$, if for some $p$, $d(T^ix,x_p^r)<2r$, then $d(T^jx,x_p^r)\geq 2r$ hence $\mathcal{S}_n^{\gg}(x)=0$. By Paley-Zygmund inequality,
\[\mu\left(m_{n}>4 r\right) \leq \mu\left(x:\mathcal{S}_n^{\gg}(x)=0\right)\leq\frac{\mathbbm{E}[(\mathcal{S}_n^{\gg})^2]-\mathbbm{E}[\mathcal{S}_n^{\gg}]^2}{\mathbbm{E}[\mathcal{S}_n^{\gg}]^2}\]
Using decay of correlation and invariance, 
\begin{align*}
\mathbbm{E}[\mathcal{S}_n^{\gg}(x)]
=\sum_{\substack{0\leq i\leq n/3\\2n/3\leq j<n}}\sum_p\int\mathbbm{1}_{p,r}(T^ix)\mathbbm{1}_{p,r}(T^jx)\,d\mu(x)\le\left(\frac{n}{3} \right)^2\sum_p\left(\left(\int\mathbbm{1}_{p,r}\,d\mu(x)\right)^2\pm 2\beta\left(n/3\right)\right)\tag{4.6}\label{ineqn:4.6}
\end{align*}
where $a=b\pm c$ means $a\in[b-c,b+c]$.
Consider 
\[\left(\mathcal{S}_n^{\gg}(x)\right)^2=\sum_{i,j}\sum_{s,t}\sum_{p,q}\mathbbm{1}_{p,r}(T^ix)\mathbbm{1}_{p,r}(T^jx)\mathbbm{1}_{q,r}(T^sx)\mathbbm{1}_{q,r}(T^tx).\] As in the proof of symbolic case, we will split this sum in terms of the distance between the indices $i,j,s,t$. Recall that \[\alpha(n)=(\log{n})^2.\]
Let $F$ be the collection of all possible quadruples of indices $(i,j,s,t)$, and define the counting function 
\[\tau:F\rightarrow \mathbbm{N}\cup\{0\},\hspace{4mm}\tau(i,j,s,t)=\sum_{\substack{a\in\{i,s\}\\b\in\{j,t\}}}\mathbbm{1}_{[a-\alpha(n),a+\alpha(n)]}(b),\]
then $\tau\leq2$ since $i,j$ and $s,t$ are at both at least $\frac{n}{3}$ iterates apart, this allows us to split $F$ into $F_m:=\{(i,j,s,t)\in F:\tau=m\}$ for $m=0,1,2$. Obviously, the following upper bounds hold for the cardinality of each $F_m$, 
\[\#F_m\leq\left(2\alpha(n)\right)^m \left(\frac{n}{3}\right)^{4-m}\tag{4.7}\label{cardbound}.\]
Recall the notation \[\mathbbm{E}[\left(\mathcal{S}_n^{\gg}\right)^2|F_m]=\sum_{(i,j,s,t)\in F_m}\sum_{p,q}\int\mathbbm{1}_{p,r}(T^ix)\mathbbm{1}_{p,r}(T^jx)\mathbbm{1}_{q,r}(T^sx)\mathbbm{1}_{q,r}(T^tx)\,d\mu(x),\]
also for simplicity, let us denote \[R_p=\int\mathbbm{1}_{p,r}\,d\mu=\mu(B(x_p,2r)).\]
\subsubsection*{Contribution of indices in $F_0$:}
For each $(i,j,s,t)\in F_0$, without loss of generality, suppose $i+\alpha(n)<s$ and $j+\alpha(n)<t$, as the alternative cases can be treated equally by exchanging the roles of $i,s$ or $j,t$ and makes no difference to the calculation. As $\min\{j,t\}-\max\{i,s\}\geq\frac{n}3$, by Lemma~\ref{lemma:4-mixing} and invariance, one obtains the following upper bound for each such quadruple $(i,j,s,t)$:\\
\begin{align*}
    &\sum_{p,q}\int\mathbbm{1}_{p,r}(T^ix)\mathbbm{1}_{p,r}(T^jx)\mathbbm{1}_{q,r}(T^sx)\mathbbm{1}_{q,r}(T^tx)\,d\mu(x)\\
    &=\sum_{p,q}\int\mathbbm{1}_{p,r}\mathbbm{1}_{q,r}\circ T^{s-i}\mathbbm{1}_{p,r}\circ T^{j-i}\mathbbm{1}_{q,r}\circ T^{t-i}\,d\mu\\
    &\leq C_1'e^{-c_1'\frac{n}{3}}k(r)^2+\sum_{p,q}\int\mathbbm{1}_{p,r}\mathbbm{1}_{q,r}\circ T^{s-i}\,d\mu\int\mathbbm{1}_{p,r}\mathbbm{1}_{q,r}\circ T^{t-j}\,d\mu\\
    &\leq C_1'e^{-c_1'\frac{n}{3}}k(r)^2+\sum_p\sum_q\left(R_pR_q+2\beta(\alpha(n))\right)^2\\
    &\leq C_1'e^{-c_1'\frac{n}{3}}r^{-2C_0'}+8\beta(\alpha(n))r^{-2C_0'}+\sum_{p,q}\left(R_pR_q\right)^2.
\end{align*}
Where the last inequality holds as $R_p,R_q\leq 1$ for any $p,q$, and by \cite[Lemma 3.3]{gouezel:hal-03788538} $k(r)\leq r^{-C_0'}$ for some $C_0'=4\log C_0$. Any term in the inequality above involving $\beta(\alpha(n))$ or $C_1'e^{-c_1'\frac{n}{3}}$ is admissible, hence for each $k\in\mathbbm{R}$ it is bounded by $\mathcal{O}(n^{-k})$ for all $n$ sufficiently large, and now we shall pick \[r=r_n=n^{-\frac{2-4\varepsilon}{\overline{D}_2+\varepsilon}},\] then by \eqref{ineqn:4.3.2} \[r^{\overline{D}_2+\varepsilon}\leq\sum_p\left(\int\mathbbm{1}_{p,r}\,d\mu\right)^2=\sum_pR_p^2\tag{4.8}\label{ineqn:4.8}.\]Therefore, the contribution of indices in $F_0$ is bounded from above up to an admissible error by
\begin{align*}
    \left(\frac{n}{3}\right)^4\sum_p\sum_q R_p^2R_q^2\leq\left(\frac{n}{3} \right)^4\sum_pR_p^2\sum_qR_q^2=\left(\frac{n}{3}\right)^4\left(\sum_{p}R_p^2\right)^2,
\end{align*}
combining with \eqref{ineqn:4.6}, up to an admissible error term,
\begin{align*}
    &\mathbbm{E}[(\mathcal{S}_n^{\gg})^2|F_0]-\mathbbm{E}[\mathcal{S}_n^{\gg}]^2\leq\mathcal{O}(n^{-\varepsilon})
\end{align*}
also by \eqref{ineqn:4.6}, as $2\beta(\frac{n}3)$ is admissible we can bound it by $n^{-3}$,
\[\mathbbm{E}[\mathcal{S}_n^{\gg}]^2\geq \left(\frac{n}{3} \right)^4\left(r_n^{\overline{D}_2+\varepsilon}-2\beta(\frac{n}{3})\right)\ge \left(\frac{n}{3} \right)^4(n^{-2-4\varepsilon}-n^{-3})^2\approx n^{8\varepsilon}\]  allowing us to conclude that there is some constant $C_4>0$:
\begin{align*}
    \frac{\mathbbm{E}[(\mathcal{S}_n^{\gg})^2|F_0]-\mathbbm{E}[\mathcal{S}_n^{\gg}]^2}{\mathbbm{E}[\mathcal{S}_n^{\gg}]^2}\leq\frac{C_4}{n^{\varepsilon}}.\tag{4.9}\label{ineqn:4.9}
\end{align*}
\subsubsection*{Contributions of indices in $F_1$:}
Now we will deal with the indices in $F_1$. Without loss of generality, suppose $|i-s|\leq\alpha(n)$, $i<s$ and $j<t$, the other cases can be treated by exchanging the roles of $i,s$ or $j,t$. By Lemma~\ref{lemma:4-mixing},
\begin{align*}
&\sum_{p,q}\int\mathbbm{1}_{p,r}(T^ix)\mathbbm{1}_{p,r}(T^jx)\mathbbm{1}_{q,r}(T^sx)\mathbbm{1}_{q,r}(T^tx)\,d\mu(x)\\
&=\sum_{p,q}\int\mathbbm{1}_{p,r}\mathbbm{1}_{q,r}\circ T^{s-i}\mathbbm{1}_{p,r}\circ T^{j-i}\mathbbm{1}_{q,r}\circ T^{t-i}\,d\mu\tag*{by invariance}\\
&\leq\sum_{p,q}\left(\int\mathbbm{1}_{p,r}\mathbbm{1}_{q,r}\circ T^{t-j}\,d\mu\right)\int\mathbbm{1}_{p,r}(x)\mathbbm{1}_{q,r}(T^{s-i}x)\,d\mu(x)+C_1'e^{-c_1'\frac{n}{3}} k(r)^2\\
&\leq\sum_{p,q}\left(R_pR_q+2\beta(\alpha(n))\right)\int\mathbbm{1}_{p,r}(x)\mathbbm{1}_{q,r}(T^{s-i}x)\,d\mu(x)+C_1'e^{-c_1'\frac{n}{3}} r^{-2C_0'},
\end{align*}and this can be bounded by the following up to an admissible error using Cauchy-Schwarz inequality,
\begin{align*}
&\sum_{p,q}\int R_pR_q\mathbbm{1}_{p,r}(x)\mathbbm{1}_{q,r}(T^{s-i}x)\,d\mu(x)\\
&=\int\sum_{p}R_p\mathbbm{1}_{p,r}(x)\sum_qR_q\mathbbm{1}_{q,r}(T^{s-i})\,d\mu(x)\\
&\leq\left(\int\left(\sum_{p}R_p\mathbbm{1}_{p,r}(x)\right)^2\,d\mu\right)^{-\frac{1}{2}}\left(\int\left(\sum_{q}R_q\mathbbm{1}_{q,r}\circ T^{s-i}\right)^2\,d\mu\right)^{-\frac{1}{2}}\\
&=\int\left(\sum_pR_p\mathbbm{1}_{p,r}\right)^2\,d\mu.\tag*{by symmetry and invariance}
\end{align*}
As there are at most $C_0$ non-zero terms of $\mathbbm{1}_{p,r}(x)$ for any $x\in X$, by the following inequality for $a_1,\dots,a_m\geq0$,\[\left(a_1+a_2+\dots+a_m\right)^2\leq m\left(a_1^2+a_2^2+\dots,a_m^2\right)\] together with the fact that $(\mathbbm{1}_{p,r})^2\leq\mathbbm{1}_{p,r}$, the sum above can be bounded by 
\begin{align*}
    \int C_0\sum_{p}R_p^2\mathbbm{1}_{p,r}\,d\mu=\sum_p C_0R_p^2\int\mathbbm{1}_{p,r}\,d\mu=C_0\sum_pR_p^3.
\end{align*}
As $\left(a_1+\dots+a_m\right)^{\frac{2}{3}}\leq\sum_{k=1}^{m}a_k^{\frac{2}{3}},$ clearly $\sum_ka_k\leq\left(\sum_ka_k^{2/3}\right)^{3/2}$, then we get up to an admissible error and some constant $C_5$, for all $n$ large such that $\alpha(n)\leq n^{\varepsilon}$, as $(\sum_pR_p^2)^{-\frac{3}{2}}4\beta(\alpha(n))$ and $\left(\sum_pR_p^2\right)^{-\frac{1}{2}}\beta(\alpha(n))$ are both admissible errors,
\begin{align*}
\frac{\mathbbm{E}[(\mathcal{S}_n^{\gg})^2|F_1]}{\mathbbm{E}[\mathcal{S}_n^{\gg}]^2}\leq&\frac{2\alpha(n)(\frac{n}{3})^3C_0\left(\sum_pR_p^2\right)^{\frac{3}{2}}}{(\frac{n}{3})^4(\sum_pR_p^2-2\beta(\alpha(n)))^2}\\
=&\frac{6\alpha(n)(\frac{n}{3})^3C_0\left(\sum_pR_p^2\right)^{\frac{3}{2}}}{n\left(\left(\sum_pR_p^2\right)^{1/2}-4\beta(\alpha(n))\left(\sum_pR_p^2\right)^{-1/2}-4\beta(\alpha(n))^2\left(\sum_pR_p^2\right)^{-3/2}\right)}\\
\leq&\frac{6C_0\alpha(n)}{n\left((r^{\overline{D}_2+\varepsilon})^{1/2}-\mathcal{O}(n^{-1})\right)}=\frac{6C_0\alpha(n)}{n\left((n^{-1+2\varepsilon}-\mathcal{O}(n^{-1})\right)}\tag*{by (\ref{ineqn:4.8})}\\
\leq&\frac{C_5n^{\varepsilon}}{n\cdot n^{-1+2\varepsilon}}=\frac{C_5}{n^{\varepsilon}}.\tag{4.10}\label{ineqn:4.10}
\end{align*}
\vspace{5mm}
\subsubsection*{Contribution of indices in $F_2$:}
Finally, let us consider indices $(i,j,s,t)$ such that $|i-s|,|j-t|\leq\alpha(n)$. By Lemma~\ref{lemma:4.2}, $\sum_q\mathbbm{1}_{q,r}(T^sx)\mathbbm{1}_{q,r}(T^tx)\leq C_0$ for any $x$, therefore for each $i,j,s,t$ in $F_2$,
\begin{align*}
    &\sum_{p,q}\int\mathbbm{1}_{p,r}(T^ix)\mathbbm{1}_{p,r}(T^jx)\mathbbm{1}_{p,r}(T^sx)\mathbbm{1}_{p,r}(T^tx)\,d\mu(x)\\
    &\leq C_0\sum_p\int\mathbbm{1}_{p,r}(T^ix)\mathbbm{1}_{p,r}(T^jx)\,d\mu(x)\\
    &\leq C_0\sum_pR_p^2+C_0\beta(\frac{n}{3})k(r),
\end{align*}therefore, as $\# F_2\leq\frac{4}{9}\alpha(n)^2n^2$, by our choice of $r_n$ in \eqref{ineqn:4.8}, up to an admissible error there is some constant $C_6$ such that,
\begin{align*}
    \frac{\mathbbm{E}[(\mathcal{S}_n^{\gg})^2|F_2]}{\mathbbm{E}[S_n^{\gg}]^2}&\leq\frac{4\alpha(n)^2(\frac{n}{3})^2C_0\sum_pR_p^2}{(\frac{n}{3})^4(\sum_pR_p^2-2\beta(\alpha(n)))^2}\\
    &= \frac{36C_0\alpha(n)^2}{n^2\left(\sum_pR_p^2-4\beta(\alpha(n))+4\beta(\alpha(n))^2\left(\sum_pR^2_p\right)^{-1}\right)}\\
    &\leq \frac{36C_0n^{2\varepsilon}}{n^2\left(r^{\overline{D}_2+\varepsilon}-\mathcal{O}(n^{-2})\right)}\\
    &\leq\frac{C_6n^{2\varepsilon}}{n^2n^{-2+4\varepsilon}}=\frac{C_6}{n^{2\varepsilon}}.\tag{4.11}\label{ineqn:4.11}
\end{align*}
Hence, putting \eqref{ineqn:4.9}, \eqref{ineqn:4.10} and \eqref{ineqn:4.11} together, we can conclude that for all $n$ large enough and $r=r_n=n^{-\frac{2-4\varepsilon}{\overline{D}_2+\varepsilon}}$, there is some constant $C_7>0$ such that
\[\mu(m_n^{\gg}>8r_n)\leq\frac{Var[\mathcal{S}_n^{\gg}]}{\mathbbm{E}[\mathcal{S}_n^{\gg}]^2}\leq\frac{C_7}{n^{\varepsilon}},\]
picking a subsequence $n_k=\lceil k^{2/\varepsilon}\rceil$, the probability is summable along the subsequence which means by the Borel-Cantelli Lemma, for $\mu$-almost every $x$, for $k$ large \[-\log{m_n^{\gg}(x)}\geq-\log{4r_{n_k}}.\] 
The proof of \eqref{ineqn:4.2} is yet complete because $m_n^{\gg}$ is not a monotone sequence, but for each $n\in[n_{k},n_{k+1}]$,\[-\log{m_n^{\gg}(x)}\geq-\log\min_{\substack{0\leq i\leq n_{k}\\2n_{k+1}/3\leq j<n_{k}}}d\left(T^ix,T^jx\right)=:-\log{m_{n_k}^*(x)},\]and as for all $k$, $m_{n_{k+1}}^{\gg}(x)\leq m_{n_k}^*(x)$, repeating the same proof we have done for $m_n^{\gg}$ one can also show that $\liminf_{k\rightarrow\infty}\frac{\log{m^*_{n_k}}}{\log{n_{k}}}\geq\frac{2-4\varepsilon}{\overline{D}_2+\varepsilon}$ whence the lower bound will be passed to the entire tail of $-\log{m_{n_k}^{\gg}(x)}$ and hence $-\log{m_n(x)}$, which implies
\[\liminf_{n\rightarrow\infty}\frac{\log{m_n(x)}}{-\log{n}}\geq\frac{2}{\overline{D}_2}\]for $\mu$-almost every $x$ by sending $\varepsilon\rightarrow{0}$.
\end{proof}
\begin{remark}
    \eqref{ineqn:4.1.1} and \eqref{ineqn:4.2} still hold if decay of correlations is exponential with respect to other Banach function spaces $\mathcal{B}$, $\mathcal{B}'$, for example, both observables are in $\mathcal{BV}$ or $Lip$, where $Lip:=\{f\in C(X): f \text{ is Lipschitz}\}$, as long as $\beta(\alpha(n))\|\mathbbm{1}_{p,r}\|_{\mathcal{B}}\|\mathbbm{1}_{q,r}\|_{\mathcal{B}'}$ remains an admissible term. For example, if the system has decay of correlations for Lipschitz observables, one can replace $\mathbbm{1}_{p,r}$ functions with $\{\rho_p^r\}_{p=1}^{k(r)}$, a set of discretisation functions defined in \cite{gouezel:hal-03788538}, although it requires more machinery to adjust the proof for \eqref{ineqn:4.2} and Lemma\ref{lemma:4-mixing}. Also, instead of exponential decay of correlations, the proof remains valid under stretched exponential decay by manipulating the scale of $k$ in $\alpha(n)=(\log{n})^k$.
\end{remark}
\section*{Acknowledgements}
I am grateful for various inspirations and help, especially the proof of Lemma~\ref{lemma:RenyiEntropy}, provided by my supervisor Mike Todd, and I also like to thank several useful comments from Lingmin Liao and Jerome Rousseau.

\printbibliography
\end{document}